\documentclass[12pt,leqno]{amsart}
\usepackage[letterpaper,top=2cm,bottom=2cm,left=3cm,right=3cm,marginparwidth=1.75cm]{geometry}
\usepackage[OT1,T2A]{fontenc}

\usepackage{comment}
\usepackage[english]{babel}
\usepackage{amsmath,amssymb,amsthm,amscd}
\usepackage{latexsym}
\usepackage{mathrsfs}
\usepackage{array}
\usepackage[dvipdfmx]{graphicx}
\usepackage{color}
\usepackage{mathtools}
\usepackage{tikz-cd} 
\usepackage{url}
\pagestyle{plain}


\newcommand{\gal}{\mathrm{Gal}}
\newcommand{\GL}{\mathrm{GL}}

\newcommand{\fr}{\mathrm{Frob}}

\newcommand{\mbZ}{\mathbb{Z}}
\newcommand{\mbQ}{\mathbb{Q}}
\newcommand{\mcA}{\mathscr{A}} 


\definecolor{e-mail}{rgb}{0,.40,.80}
\definecolor{reference}{rgb}{.20,.60,.22}
\definecolor{mrnumber}{rgb}{.80,.40,0}
\definecolor{citation}{rgb}{0,.40,.80}
\usepackage[breaklinks=true,colorlinks=true,linkcolor=reference, citecolor=citation, urlcolor=e-mail]{hyperref}


\theoremstyle{plain}
\newtheorem{theorem}{Theorem}[section]
\newtheorem*{theorem*}{Theorem} 
\newtheorem{conjecture}[theorem]{Conjecture}
\newtheorem{lemma}[theorem]{Lemma}
\newtheorem{proposition}[theorem]{Proposition}
\newtheorem{corollary}[theorem]{Corollary}
\newtheorem{remark}[theorem]{Remark}

\theoremstyle{definition} 
\newtheorem{definition}{Definition}[section]
\newtheorem{example}{Example}[section]

\title{On the Rasmussen--Tamagawa conjecture for abelian fivefolds}
\author[]{Shun Ishii}
\email{ishii.shun@keio.jp}
\address{{\footnotesize Department of Mathematics, Keio University, 3-14-1 Hiyoshi, Kouhoku-ku, Yokohama 223-8522, Japan.}}
\date{}

\begin{document}
\begin{abstract}
    In this paper, we study the Rasmussen--Tamagawa conjecture for abelian varieties with constrained prime power torsion. Previously, Rasmussen and Tamagawa have established the conjecture under the Generalized Riemann Hypothesis (GRH) for abelian varieties of any dimension over any number field, and unconditionally for those over $\mbQ$ of dimension at most $3$. We prove several cases of the conjecture by giving partial refinements of their techniques. Among other things, we give a proof of the conjecture for abelian fivefolds over $\mbQ$.
\end{abstract}

\maketitle

\tableofcontents

\section{Introduction}\label{sec:1}

\subsection{Rasmussen-Tamagawa conjecture}\label{subsec:1.1}

The aim of this paper is to prove several unconditional results about the Rasmussen--Tamagawa conjecture on abelian varieties with constrained torsion.

Throughout the section, let $K$ be a number field, $g$ a positive integer and $\ell$ a (rational) prime. For an abelian variety $A$ over $K$, we write $[A]$ for the $K$-isomorphism class of $A$, and define the set $\mcA(K, g, \ell)$ of isomorphism classes of certain $g$-dimensional abelian varieties over $K$ as follows:
\[
\mcA(K, g, \ell) \coloneqq \{ [A] \mid \text{$A$ has good reduction outside $\ell$ and } K(A[\ell^{\infty}]) \text{ is a pro-$\ell$-extension of $K(\zeta_{\ell})$} \}.
\]

Here $\zeta_{\ell}$ denotes a primitive $\ell$-th root of unity. Note that the first condition implies the finiteness of the set $\mcA(K, g, \ell)$ by the Shafarevich conjecture established by Faltings \cite{Fa83} and Zarhin \cite{Za85}. 

In \cite{RT08}, Rasmussen and Tamagawa have conjectured that $\mcA(K, g, \ell)$ is even empty if $\ell$ is suitably large enough. We refer the reader to \cite[\textsection 0]{RT08} for the original motivation for this problem, especially its connection with the pro-$\ell$ outer Galois representation associated to the projective line minus three points.

\begin{conjecture}[The Rasmussen--Tamagawa conjecture {\cite[Conjecture 1]{RT08}}]
    The set $\mcA(K, g, \ell)$ is empty for every sufficiently large $\ell$.
\end{conjecture}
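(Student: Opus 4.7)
The plan is to attack the conjecture through the Galois-theoretic strategy initiated by Rasmussen and Tamagawa. The full conjecture is open in general, so the realistic target (and presumably the paper's goal) is a new unconditional case such as abelian fivefolds over $\mbQ$. Throughout, fix $[A]\in\mcA(K,g,\ell)$ and let $\bar{\rho}_\ell\colon G_K\to\GL_{2g}(\mathbb{F}_\ell)$ be the mod-$\ell$ representation on $A[\ell]$.

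The first step is to convert the two hypotheses defining $\mcA(K,g,\ell)$ into representation-theoretic constraints. The condition that $K(A[\ell^\infty])/K(\zeta_\ell)$ is pro-$\ell$ forces every Jordan--H\"older constituent of $\bar{\rho}_\ell$ to be a power of the mod-$\ell$ cyclotomic character $\bar{\chi}_\ell$. Combined with good reduction outside $\ell$, Raynaud's classification of finite flat group schemes of type $(\ell,\dots,\ell)$ and Fontaine's bounds on Hodge--Tate weights pin the exponents to $\{0,1\}$, and the Weil pairing then forces exactly $g$ trivial and $g$ cyclotomic constituents. This yields a strong local filtration on the $\ell$-divisible group of $A$ at every prime above $\ell$.

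The second step produces a contradiction for $\ell$ large. The constraint on $\bar{\rho}_\ell$ bounds the discriminant of $K(A[\ell])/K$, and combined with Odlyzko-type root-discriminant lower bounds (or, under GRH, with effective Chebotarev) this rules out such $A$ once $\ell$ is sufficiently large, provided the image of Galois can be localized in a tractable subgroup. For fivefolds over $\mbQ$ unconditionally, I would split by the endomorphism algebra of the simple factors of $A$: the possibilities for a $\mbQ$-simple factor of dimension $1$ or $5$ are very limited, and for each type one combines the cyclotomic-character constraint with Frobenius-trace information at a few small primes of good reduction, where the Weil bound and the imposed $\bar{\rho}_\ell$-structure become incompatible.

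The main obstacle---the reason the conjecture remains open in full generality and the Rasmussen--Tamagawa unconditional results stop at dimension three---is the absence of effective unconditional Chebotarev bounds matching GRH. The delicate subcase is when $A$ is absolutely simple with small endomorphism ring, where no CM-reduction step is available; a contradiction must then be extracted purely from the mod-$\ell$ image together with a bounded number of rational Frobenius polynomials. The genuinely new input needed for dimension five, as opposed to three, will be a more careful enumeration of how $g$ cyclotomic and $g$ trivial residual characters can sit inside $\GL_{10}(\mathbb{F}_\ell)$ compatibly with the symplectic pairing, and how the resulting unipotent filtrations interact with Frobenius polynomials of $A$ at auxiliary primes.
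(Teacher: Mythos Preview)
Your first step goes wrong at a crucial point. You invoke Raynaud and Fontaine to pin the exponents $i_r$ to $\{0,1\}$, but this requires $A[\ell]$ to extend to a finite flat group scheme over $\mathcal{O}_{K_\lambda}$, i.e.\ good reduction at $\ell$. The definition of $\mcA(K,g,\ell)$ only imposes good reduction \emph{outside} $\ell$; at $\ell$ one only knows (for $\ell\gg 0$) potentially good reduction, governed by the tame inertia invariant $e$ of Definition~\ref{dfn:invariants}. The exponents $i_r$ are therefore not confined to $\{0,1\}$ but are controlled by $e$ via Lemma~\ref{lmm:e}, and the whole difficulty of the problem lies precisely in the cases $e>2$. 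Indeed, if the exponents were always $\{0,1\}$ then $m_{\mbQ}=\ell-1$, which together with $m_{\mbQ}\mid \tfrac{e}{2}$ and the bound on prime factors of $e$ would settle the conjecture in one line; this is essentially the already-known semistable case \cite[Theorem~3.6]{RT17}.

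Your second step is also off the mark. The paper uses neither Odlyzko-type discriminant bounds nor any splitting by endomorphism algebra. The actual engine is the numerical classification of Proposition~\ref{prp:key}: one writes $2g=\sum_{d\mid e} n_d\varphi(d)$, uses the constraint $n_d f_\lambda f_{\lambda/\ell}\neq 1$ to force congruence obstructions on $\ell$, and then eliminates the finitely many surviving values of $m_{\mbQ}$. The elimination uses analytic input---Elliott's bound on the least prime $k$-th power residue (Proposition~\ref{prp:mq6}) and Burgess's bound on the least quadratic \emph{non}residue (Theorem~\ref{thm:nonres})---together with a new arithmetic idea: for a carefully chosen small prime $p$ one computes the trace $a_{p,m}$ of $\fr_p^{m}$ on $V_\ell(A)$ via the cyclotomic-character description, matches it against the Weil bound, and deduces that every Frobenius eigenvalue is a root of a binomial $T^{m}\pm p^{m/2}$. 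The irreducible factorization of that binomial over $\mbQ$ then forces $2g$ into a residue class that contradicts $g$ odd. This is how $m_{\mbQ}\in\{8,12,2p\}$ are excluded (Propositions~\ref{prp:mq8}--\ref{prp:mq2p}), and it is exactly the new ingredient that pushes the unconditional result from $g\le 3$ to $g=5$. Your proposal does not touch any of this machinery.
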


In the rest of this paper, we abbreviate the statement of the Rasmussen-Tamagawa conjecture simply as RT($K,g$), namely:
\begin{center}
    RT($K,g$): The set $\mcA(K, g, \ell)$ is empty for every sufficiently large $\ell$.
\end{center} 

There are several preceding results about this conjecture. Most of them consider suitable subsets of $\mcA(K, g, \ell)$ and show they are empty for every large $\ell$. For example, Bourdon \cite{Bo15} ($g=1$) and Lombardo \cite{Lo18} (in general) consider a subset of $\mcA(K, g, \ell)$ consisting of CM abelian varieties, and Arai--Momose \cite{AM14} considers the case of QM abelian varieties. We note that certain analogues of the conjecture for  Galois representations are studied by Ozeki \cite{Oz11} and Ozeki--Taguchi \cite{OT14}, and are applied to the case of semistable abelian varieties (which is also established by Rasmussen--Tamagawa \cite[Theorem 3.6]{RT17}).

However, the conjecture in full generality largely remains open. Most notably, Rasmussen and Tamagawa obtained the following result:

\begin{theorem}[Rasmussen--Tamagawa {\cite[\textsection 7]{RT17}}]\label{thm:RT} \,
    \begin{enumerate}
        \item $\mathrm{RT}(\mbQ,g)$ holds for every $g \leq 3$.
        \item $\mathrm{RT}(K,1)$ holds if either
        \begin{itemize}
            \item $[K:\mbQ]=2$,
            \item $[K:\mbQ]=3$, or
            \item $K/\mbQ$ is a Galois extension whose Galois group has exponent $3$.
        \end{itemize}
        \item $\mathrm{RT}(K,g)$ holds for every $K$ and $g$, assuming {\rm GRH}.
    \end{enumerate}
\end{theorem}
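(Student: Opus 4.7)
The plan is to translate the constrained-torsion hypothesis into a very rigid description of the mod-$\ell$ Galois representation $\bar\rho\colon G_K \to \GL_{2g}(\mathbb{F}_\ell)$ attached to $A \in \mcA(K,g,\ell)$, and then derive a contradiction by pairing this description with the Weil bound on the Frobenius trace at a small prime of good reduction.

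Since $K(A[\ell^{\infty}])/K(\zeta_\ell)$ is pro-$\ell$, the image of $\bar\rho|_{G_{K(\zeta_\ell)}}$ is an $\ell$-group, so its semisimplification factors through $\gal(K(\zeta_\ell)/K) \cong (\mbZ/\ell)^\times$. Hence $\bar\rho^{\mathrm{ss}} \cong \bigoplus_{i=1}^{2g} \chi_\ell^{a_i}$ for some exponents $a_i \in \mbZ/(\ell-1)$, where $\chi_\ell$ is the mod-$\ell$ cyclotomic character. The Weil pairing forces the multiset $\{a_i\}$ to be invariant under $a \mapsto 1-a$, and a Raynaud-type analysis at primes above $\ell$ (using good reduction outside $\ell$) expresses each $a_i$ as a sum of bounded ``fundamental character'' contributions of level at most $2g$, with coefficients in $\{0,1\}$. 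In particular, the set of admissible ``types'' $(a_1,\ldots,a_{2g})$ is finite and independent of $\ell$.

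Next, pick a prime $\p$ of $K$ lying above a rational prime $p \neq \ell$ at which $A$ has good reduction. The Weil conjectures give an integer $t_\p \coloneqq \mathrm{tr}(\bar\rho(\fr_\p))$ with $|t_\p| \le 2g\sqrt{N\p}$, while the semisimplification computation yields
\[
t_\p \;\equiv\; \sum_{i=1}^{2g} (N\p)^{a_i} \pmod{\ell}.
\]
For each of the finitely many admissible types the right-hand side is a fixed integer $S(N\p)$ depending only on the type and on $N\p$. If $S(N\p) \ne t_\p$ in $\mbZ$, then $\ell$ divides a bounded nonzero integer and hence lies in a finite set; the remaining case $S(N\p) = t_\p$ is eliminated by refining the argument to the full characteristic polynomial of $\fr_\p$, whose reduction is pinned down by the type and whose roots must satisfy $|\alpha_i| = \sqrt{N\p}$.

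The main obstacle is producing a prime $\p$ of small enough norm for the Weil inequality to be effective. Under GRH, effective Chebotarev applied to $K(\zeta_\ell)/K$ furnishes such a $\p$ with $N\p \ll (\log \ell[K:\mbQ])^2$ uniformly in $A$, which handles part (3). Over $\mbQ$ the rational prime $p = 2$ is automatically of good reduction once $\ell \ge 3$, and the Weil bound $|t_2| \le 2g\sqrt{2}$ is tight enough that a direct enumeration of Raynaud types settles $g \le 3$, giving part (1). For part (2), the case $g = 1$ simplifies the pairing $a \leftrightarrow 1-a$ to a single pair $\{a, 1-a\}$, after which each of the listed arithmetic conditions on $K/\mbQ$ (small degree, or Galois of exponent $3$) guarantees a prime of $K$ of small enough norm to run the same enumeration. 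The delicate point is thus to carry out the type enumeration sharply enough to beat the growth of the integral Weil bound at a prime one can locate unconditionally.
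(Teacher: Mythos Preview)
Your sketch captures the opening move correctly: the pro-$\ell$ hypothesis forces $\bar\rho^{\mathrm{ss}}\cong\bigoplus\chi^{a_i}$, and one then plays Frobenius--trace congruences against Weil bounds. But two of your claimed shortcuts are exactly where the substance lies, and the paper's review of \cite{RT17} in Section~\ref{sec:2} shows the actual argument is structured differently.

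First, the assertion that ``the set of admissible types $(a_1,\ldots,a_{2g})$ is finite and independent of $\ell$'' is not what the Raynaud/inertia analysis delivers. The exponents $a_i$ live in $\mbZ/(\ell-1)\mbZ$, which changes with $\ell$; what one bounds independently of $\ell$ is the auxiliary invariant $m_\mbQ$ of Definition~\ref{dfn:invariants}(2), namely the order of $\sigma\mapsto(\chi^{2a_i-1}(\sigma))_i$. The divisibility $m_\mbQ\mid\gcd(e/2,\ell-1)$ of Lemma~\ref{lmm:e}, combined with the fact that every prime divisor of $e$ is at most $2g+1$, is what makes the problem finite---not a finite list of tuples $(a_1,\ldots,a_{2g})$.

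Second, the claim that taking $p=2$ and enumerating types ``settles $g\le3$'' does not work. A single fixed prime yields one congruence, and you give no mechanism forcing $S(2)\neq t_2$ for every surviving type; the hand-wave to ``refining to the full characteristic polynomial'' is the entire difficulty. The actual argument runs in the opposite direction: Proposition~\ref{prp:4.2} shows that \emph{no} prime $p<\ell/4g$ can satisfy $\chi(m_\mbQ)(\fr_p)=1$, and then Elliott's unconditional bound on the least prime $k$-th power residue forces $m_\mbQ>6$ (Proposition~\ref{prp:mq6}). The remaining possible values of $m_\mbQ$ for $g\le3$ are eliminated by an ingredient entirely absent from your sketch: the isotypic decomposition of the special fibre of the N\'eron model under tame inertia, which yields the combinatorial constraints of Proposition~\ref{prp:key} on the integers $e$, $n_d$, and the congruence class of $\ell$. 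It is this decomposition, not a trace computation at $p=2$, that the case analysis in \cite[\S7]{RT17} runs on; see the worked example for $g=2$ after Proposition~\ref{prp:key}. For part~(3) your GRH/effective-Chebotarev idea is correct in spirit, but again the target is to locate a small prime that is an $m_\mbQ$-th power residue modulo $\ell$, not merely any prime of small norm.
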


There is also a uniform version of the conjecture: Let $n$ be a positive integer. The conjecture states that, if $\ell$ is sufficiently large, then $\mcA(K,g,\ell)=\emptyset$  for every $K$ with $[K:\mbQ]=n$ and $\ell>C(g,n)$.  The uniform version was also settled if $n$ is odd \cite[Theorem 5.2]{RT17} under GRH. We also refer the interested reader to recent results by Rasmussen-McLeman  \cite{MR24}\cite{MR25} for the case where $(g,n)=(1,2)$.

We roughly explain the strategy of Rasmussen and Tamagawa, assuming $K=\mbQ$ for simplicity: As Lemma \ref{lmm:mod-ell} shows, that $[A] \in \mcA(\mbQ,g,\ell)$ implies the semisimplification of $A[\ell]$ is isomorphic to  a direct sum of powers of the mod-$\ell$ cyclotomic character. This puts strong constraints on how a Frobenius element at $p \ll \ell$ acts on the $\ell$-adic Tate module of $A$. Based on this observation, they showed $p$ cannot be a $m_{\mbQ}$-th power residue modulo $\ell$, cf. Proposition \ref{prp:key} (here, $m_{\mbQ}$ is an integer attached to $A$). 

Hence the problem is reduced to finding a sufficiently small prime $m_{\mbQ}$-th power residue modulo $\ell$, and an effective version of the Chebotarev density theorem is  enough. In several cases, classical unconditional bounds are found to be enough: For example, Rasmussen and Tamagawa resort to a classical result of Elliott \cite[Theorem]{El71} on the least prime $k$-th power residue to show RT($\mbQ, g$) for $g \leq 3$.

They also use a certain classification of several numerical data attached to $[A] \in \mcA(K, g, \ell)$, cf. Proposition \ref{prp:key}. This plays a central role in proving the known cases of the conjecture. However, the classification becomes more complicated as the dimension increases, and even in the case of abelian fourfolds over $\mbQ$, there remains five such data, cf. Example \ref{ex:d=4}.

\subsection{Main result}\label{subsec:1.2}

The goal of this paper is to establish the Rasmussen--Tamagawa conjecture for abelian \emph{fivefolds} over $\mbQ$, which is somewhat unexpected in view of the preceding paragraph.

\begin{theorem}[=Theorem {\ref{thm:proof}}]\label{thm:main}
    $\mathrm{RT}(\mbQ,5)$ holds.
\end{theorem}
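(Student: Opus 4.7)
\emph{Overall strategy.} The plan is to follow the strategy of Rasmussen and Tamagawa recalled in \S\ref{subsec:1.1}, pushed through dimension $5$. Assume for contradiction that $[A]\in\mcA(\mbQ,5,\ell)$ for some large prime $\ell$. By Proposition~\ref{prp:key}, attached to $A$ is an integer $m_{\mbQ}=m_{\mbQ}(A,\ell)$ such that for every prime $p\ne \ell$ of good reduction, $p$ fails to be an $m_{\mbQ}$-th power residue modulo $\ell$. Since $A$ has good reduction at every $p\ne\ell$, it suffices to exhibit, whenever $\ell$ exceeds an effective constant, some prime $p<\ell$ that \emph{is} an $m_{\mbQ}$-th power residue modulo $\ell$; this contradiction will yield $\mcA(\mbQ,5,\ell)=\emptyset$.

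\emph{Classification.} The first real step is to enumerate, for $(K,g)=(\mbQ,5)$, all numerical data that can be attached to an element of $\mcA(\mbQ,5,\ell)$. These data arise from combining (i) the multiset of exponents of cyclotomic twists in the mod-$\ell$ semisimplification $A[\ell]^{\mathrm{ss}}$, constrained by the Weil pairing (pairs summing to $1$) and the Hodge--Tate weights (each exponent lying in $\{0,1\}$ once $\ell$ is large), with (ii) the partition of $5$ coming from the $\mbQ$-isogeny decomposition of $A$, further refined by how the decomposition distributes the twists. For fourfolds this produces the five cases of Example~\ref{ex:d=4}; for fivefolds I expect a list of roughly seven to ten cases, each producing an explicit value of $m_{\mbQ}$ to be inserted in step three.

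\emph{Small-residue input.} For each case on this list I would bound $m_{\mbQ}$ and apply an unconditional effective lower bound on the least prime $m_{\mbQ}$-th power residue modulo $\ell$. In the easier cases, $m_{\mbQ}$ is small enough that Elliott's theorem \cite{El71}, which sufficed for RT($\mbQ,\le 3$), already produces a suitable prime $p<\ell$.

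\emph{Main obstacle.} The substantive difficulty lies in those cases where $m_{\mbQ}$ is so large that Elliott's bound cannot reach below $\ell$, which is exactly why dimension $5$ was out of reach previously. This is the ``partial refinement'' advertised in the abstract, and I expect it to take one of two forms: either (a) one sharpens Proposition~\ref{prp:key} in the offending numerical data by exploiting additional congruence constraints — for instance the determinantal relation $\det A[\ell]\cong\chi^{5}$, or extra symmetry when $A$ has a $\mbQ$-simple factor of dimension $5$ — so as to shrink $m_{\mbQ}$; or (b) one replaces the single-prime compatibility of Proposition~\ref{prp:key} with a joint compatibility imposed on several small primes simultaneously, reducing the existence of a small residue to a Burgess/Heath--Brown-style character-sum estimate. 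The most delicate case is likely the $\mbQ$-simple one, where no isogeny factorization is available to break the analysis into pieces of smaller dimension, and where the refinement must be squeezed out of the internal symmetry of $A[\ell]$ alone.
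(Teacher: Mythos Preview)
Your proposal misses the actual mechanism and would stall at exactly the point where Rasmussen--Tamagawa's original argument stalls. The difficulty is not that $m_{\mbQ}$ is too large for Elliott to produce a small $m_{\mbQ}$-th power residue; rather, values like $m_{\mbQ}\in\{8,10,12\}$ already fall outside Elliott's range, and neither your option (a) (shrinking $m_{\mbQ}$ via determinant or $\mbQ$-simplicity constraints) nor your option (b) (multi-prime character sums to find residues) is what rescues the argument. The classification data, incidentally, come from the decomposition $2g=\sum n_d\varphi(d)$ of the \emph{special fiber} of the N\'eron model at $\ell$ (Proposition~\ref{prp:key}), not from the $\mbQ$-isogeny decomposition of $A$; and the exponents $i_r$ are not forced into $\{0,1\}$.

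The new idea you are missing is this: instead of searching for $m_{\mbQ}$-th power residues, one uses a small prime quadratic \emph{non}residue $p$ (supplied by Burgess's bound, Theorem~\ref{thm:nonres}) or an \emph{explicit} quadratic residue (e.g.\ $p=5$ when $m_{\mbQ}=10$, via reciprocity). In either case the trace of $\fr_p^{m_{\mbQ}}$ is pinned to $\pm 2gp^{m_{\mbQ}/2}$, forcing every Frobenius eigenvalue to be a root of the binomial $T^{m_{\mbQ}}\mp p^{m_{\mbQ}/2}$. One then factors this binomial over $\mbQ$ and observes that, for the relevant values of $m_{\mbQ}$, all irreducible factors compatible with the $p$-Weil condition have degree divisible by $4$; since $2g=10\not\equiv 0\bmod 4$, this is a contradiction. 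The oddness of $g$ is essential here and explains why the method handles $g=5$ while $g=4$ remains open --- precisely the opposite of what one might guess from the fourfold table in Example~\ref{ex:d=4}. This accounts for $m_{\mbQ}\in\{8,10,12\}$ (Propositions~\ref{prp:mq8}--\ref{prp:mq2p}); the remaining possible values ($20,24,30$, and those divisible by $7$) are then eliminated by the congruence obstruction of Proposition~\ref{prp:key}(2).
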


\begin{example}
    We give examples of primes $\ell$ such that $\mcA(\mbQ,5,\ell)$ is nonempty. First, note that we have an obvious map 
    \[
    \mcA(\mbQ, g, \ell) \times \mcA(\mbQ, g', \ell) \to \mcA(\mbQ, 5, \ell)
    \] for each pair of positive integers $(g,g')$ with $g+g'=5$. In particular, if $\mcA(\mbQ, 1, \ell)$ is nonempty, so is $\mcA(\mbQ, 5, \ell)$. According to the complete list of $\mcA(\mbQ, 1, \ell)$ \cite[Table 2]{RT08},  the set $\mcA(\mbQ,5,\ell)$ is nonempty at least if 
    \[
    \ell \in \{ 2,3,7,11,19,43,67,163 \}.
    \]
    A more nontrivial example comes from the Jacobian of the Fermat curve of degree $11$, which decomposes into absolutely simple abelian fivefolds $A_{1}, \dots, A_{9}$ having potential CM by $\mbQ(\mu_{11})$. They define pairwise distinct elements of $\mcA(\mbQ, 5, 11)$, though all of them are isomorphic over $\bar{\mbQ}$. It might be an interesting question to find a prime $\ell$ outside the above list such that $\mcA(\mbQ, 5, \ell)$ is nonempty.
\end{example}

The proof of Theorem \ref{thm:main} relies on Rasmussen--Tamagawa's strategy. Moreover, we introduce two minor but new insights to prove several particular cases of the conjecture: one is the use of small prime quadratic \emph{nonresidues}, and the other is the use of explicit prime quadratic residues. Such primes allow us to conclude that $[A] \in \mcA(\mbQ, g, \ell)$ with certain numerical data cannot exist, by considering the irreducible decompositions of the characteristic polynomials of Frobenius elements at these primes, cf. Propositions \ref{prp:mq8}-\ref{prp:mq2p}. Since we typically  assume that $g$ is odd in order to apply our arguments, this approach cannot be applied to the case of abelian fourfolds.

Regarding the first insight, the origin of the idea of using small quadratic nonresidues already can be found in \cite[Proposition 7.6]{RT17} which was crucial to prove RT($K,1$) where $K/\mbQ$ is a cubic extension. The second one does not seem to be discussed in any of the relevant papers.

Fortunately, these observations are enough to conclude $\mathrm{RT}(\mbQ,5)$. The two new ingredients also give nontrivial results on $\mathrm{RT}(K,2)$ and $\mathrm{RT}(K,3)$ where $K$ is a quadratic field, but we could not settle these cases in full generality at the writing of this paper. 

\medskip

{\bf Acknowledgements.} This work grew out of a learning seminar on the Rasmussen-Tamagawa conjecture held at Keio University. The author would like to express his deepest gratitude to \emph{Hidetada Wachi} and \emph{Naganori Yamaguchi} for helpful discussions and small talk at the seminar, both of which are essential for making the present paper possible. Throughout this work, the author was supported by JSPS KAKENHI Grant Number JP23KJ1882.

\medskip

{\bf Notation and Conventions.} Throughout the paper, we work with a fixed prime $\ell$, which is often assumed to be sufficiently large. 
\begin{itemize}
    \item We fix an algebraic closure $\bar{\mbQ}$ of $\mathbb{Q}$, and every number field is considered to be a subfield of $\bar{\mbQ}$. We also fix a primitive $n$-th root of unity $\zeta_{n} \in \bar{\mbZ}$ for each integer $n>0$. We write $\bar{\mbZ}$ for the integral closure of $\mbZ$ in $\bar{\mbQ}$. 
    
    \item The absolute Galois group $\gal(\bar{\mbQ}/K)$ of a number field $K$ is denoted by $G_{K}$. For a prime $\lambda$ of $K$ above $\ell$, we write $K_{\lambda}$  (resp. $\mathbb{F}_{\lambda}$) for the $\lambda$-adic completion of $K$ (resp. the residue field at $\lambda$). We denote the degree of $\mathbb{F}_{\lambda}/\mathbb{F}_{\ell}$ by $f_{\lambda/\ell}$ and the maximal unramified extension of $K_{\lambda}$ by $K_{\lambda}^{\mathrm{un}}$.
    \item We denote the mod-$\ell$ cyclotomic character by $\chi \colon G_{\mbQ} \to \mathbb{F}_{\ell}^{\times}$. 
    \item We fix a Frobenius element $\fr_{p} \in G_{\mbQ}$ at $p$ for every prime $p \neq \ell$. Note that $\chi(\fr_{p})=p \bmod \ell$.
\end{itemize}

\section{Review of Rasmussen--Tamagawa's work}\label{sec:2}

The aim of this section is to review  Rasmussen--Tamagawa's work \cite{RT17} for completeness. All results in this section are proved in op.cit, and no originality is claimed for this section.

\subsection{Constraints on numerical invariants}\label{sec:2.1}

First, we introduce the following simple but extremely useful lemma: 

\begin{lemma}[{\cite[Lemma 3.3]{RT17}}]\label{lmm:mod-ell}
    Let $A$ be a $g$-dimensional abelian variety over $K$, and 
    \[
    \rho_{A,\ell} \colon G_{K} \to \GL(A[\ell])
    \] the Galois representation attached to the $\ell$-torsion subgroup of $A$. Then $[A] \in \mcA(K,g,\ell)$ if and only if $A$ has good reduction everywhere outside $\ell$ and, after choosing a suitable basis of $A[\ell]$, the representation $\rho_{A,\ell}$ is of the form 
    \begin{align*}
        \rho_{A,\ell}=
        \begin{pmatrix}
            \chi^{i_{1}} & \ast & \cdots & \ast \\
             & \chi^{i_{2}} & \cdots & \ast \\
             &  & \ddots & \ast \\
             &  &  & \chi^{i_{2g}}
        \end{pmatrix}.
    \end{align*}
    Here, $i_{1}, \dots, i_{2g}$ are elements of $\mbZ/(\ell-1)\mbZ$.
\end{lemma}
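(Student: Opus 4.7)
The plan is to identify the condition on the full $\ell$-adic Tate module with a condition on the mod-$\ell$ representation $\bar\rho := \rho_{A,\ell}$, and then to use standard facts about representations of $\ell$-groups in characteristic $\ell$.

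First, I would observe that the kernel of the reduction map $\GL_{2g}(\mbZ_\ell) \to \GL_{2g}(\mathbb{F}_\ell)$ is the congruence subgroup $I + \ell M_{2g}(\mbZ_\ell)$, which is pro-$\ell$. Consequently, $K(A[\ell^\infty])/K(\zeta_\ell)$ is pro-$\ell$ if and only if $\bar\rho(G_{K(\zeta_\ell)})$ is an $\ell$-group inside $\GL_{2g}(\mathbb{F}_\ell)$. So it suffices to show that $\bar\rho(G_{K(\zeta_\ell)})$ being an $\ell$-group is equivalent to $\bar\rho$ admitting a basis in which it takes the stated upper triangular form.

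For the nontrivial direction, set $G' := \bar\rho(G_{K(\zeta_\ell)})$, a normal subgroup of $G := \bar\rho(G_K)$, and assume $G'$ is an $\ell$-group. Any irreducible $\mathbb{F}_\ell$-representation of a finite $\ell$-group is trivial and $1$-dimensional, since any such representation has a nonzero fixed vector (elements of $\ell$-power order act unipotently in characteristic $\ell$, so one can induct on a central element of order $\ell$). Combined with Clifford's theorem, this shows that if $W$ is any irreducible $G$-subquotient of $V = A[\ell]$, then $W|_{G'}$ is semisimple with trivial constituents, so $G'$ acts trivially on $W$. Hence $W$ factors through $G/G'$, a quotient of $\gal(K(\zeta_\ell)/K)$, which is cyclic of order dividing $\ell-1$ and embeds into $\mathbb{F}_\ell^\times$ via $\chi$. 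It follows that $W$ is $1$-dimensional and given by some power $\chi^{i}$ of the cyclotomic character. A Jordan--H\"older series of $V$ as a $G$-module therefore provides a complete $G$-stable flag $0 = V_0 \subset V_1 \subset \cdots \subset V_{2g} = V$, and a basis adapted to this flag puts $\bar\rho$ into the desired upper triangular form.

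The reverse direction is immediate: if $\bar\rho$ has the stated upper triangular shape, then $\bar\rho|_{G_{K(\zeta_\ell)}}$ has all diagonal entries equal to $1$, hence lies in the upper unitriangular subgroup of $\GL_{2g}(\mathbb{F}_\ell)$, which is an $\ell$-group. I do not expect any serious obstacle here; the only subtle point is ensuring the flag is stable under all of $G$, not merely under $G'$, which is exactly what Clifford's theorem together with the fact that $\mathbb{F}_\ell$ splits the $1$-dimensional characters of $G/G'$ guarantees.
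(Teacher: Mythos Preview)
Your argument is correct. The paper itself does not supply a proof of this lemma; it merely records the statement and cites \cite[Lemma~3.3]{RT17}, noting at the start of Section~\ref{sec:2} that ``all results in this section are proved in op.\ cit.'' So there is no in-paper proof to compare against, only the referenced one, and your approach is the standard one: reduce from $A[\ell^\infty]$ to $A[\ell]$ using that the principal congruence subgroup of $\GL_{2g}(\mbZ_\ell)$ is pro-$\ell$, then exploit that an $\ell$-group acting in characteristic $\ell$ has only the trivial irreducible representation, together with the fact that $G/G'$ is cyclic of order dividing $\ell-1$ and hence split over $\mathbb{F}_\ell$.

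One cosmetic remark: the appeal to Clifford's theorem is correct but heavier than necessary. Since $G' \trianglelefteq G$ is an $\ell$-group, the fixed subspace $V^{G'}$ is nonzero and $G$-stable; iterating on $V/V^{G'}$ already yields a $G$-stable filtration with $G'$ acting trivially on each graded piece, after which the cyclic quotient $G/G'$ of order prime to $\ell$ diagonalises each piece over $\mathbb{F}_\ell$. This avoids invoking semisimplicity of restrictions and makes the $G$-stability of the flag transparent. Either way the argument goes through without difficulty.
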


 Rasmussen and Tamagawa associate $[A] \in \mcA(K,g,\ell)$ with several invariants, which are summarized as follows:

\begin{definition}\label{dfn:invariants}
    Suppose $[A] \in  \mcA(K,g,\ell)$.
    \begin{enumerate}
    \item We define $\{ i_{1}, i_{2}, \dots, i_{2g} \}$ to be the set of elements of $\mbZ/(\ell-1)\mbZ$ defined as in Lemma \ref{lmm:mod-ell}. This is well-defined modulo $|\chi(G_{K})|$, which is equal to $\ell-1$ for every $\ell \gg 0$.
    \item We define $m_{\mbQ}$ to be the order of the image of the homomorphism
    \[
    G_{\mbQ} \to (\mathbb{F}_{\ell}^{\times})^{2g} \,;\,  \sigma \mapsto (\chi^{2i_{r}-1}(\sigma))_{1 \leq r \leq 2g}.
    \] By definition, $m_{\mbQ}$ divides $\ell-1$.
    \item Suppose $\lambda$ is a place of $K$ lying above $\ell$, and let $\ell'$ be a prime different from $\ell$. We define $e_{\lambda}$ to be the order of the image of the $\ell'$-adic representation
    \[
        G_{K_{\lambda}^{\mathrm{un}}} \to \GL(V_{\ell'}(A_{K_{\lambda}})^{\mathrm{ss}})
    \] associated to the semisimplification of $V_{\ell'}(A_{K_{\lambda}})$. Then $e_{\lambda}$ is finite, and does not depend on the choice of $\ell'$ \cite[Expose IX, Theoreme 3.6 and \textsection 4.1]{SGA7}. When $K=\mbQ$, we abbreviate $e_{\ell}$ as $e$.
    \item We write 
    \[
    \chi(m_{\mbQ}) \colon G_{\mbQ} \to \mathbb{F}_{\ell}^{\times} \to \mathbb{F}_{\ell}^{\times}/(\mathbb{F}_{\ell}^{\times})^{m_{\mbQ}}
    \] for the compositum of the mod-$\ell$ cyclotomic character and the natural quotient map. 
    \end{enumerate}
\end{definition}

\begin{remark}\label{rmk:e} Suppose $\lambda$ is a place of $K$ above $\ell$ and $A$ is a $g$-dimensional abelian variety over $K_{\lambda}$. Let $e_{\lambda}$ be an integer defined in the same way as Definition \ref{dfn:invariants} (2).
    \begin{enumerate}
        \item Let $L/K_{\lambda}^{\mathrm{un}}$ be the Galois extension (of degree $e_{\lambda}$) corresponding to the kernel of the $\ell'$-adic representation
        \[
            G_{K_{\lambda}^{\mathrm{un}}} \to \GL(V_{\ell'}(A_{K_{\lambda}})^{\mathrm{ss}}). 
        \] Then $A_{L}$ has semistable reduction \cite[Expose IX, Corollaire 3.8]{SGA7}.
        \item Every prime divisor of $e_{\lambda}$ is less than or equal to $2g+1$  \cite[Corollary 3.2]{RT17}. As a corollary, $L/K_{\lambda}^{\mathrm{ur}}$ is a totally tamely ramified (and hence cyclic) extension whenever $\ell>2g+1$ \cite[Corollary 6.7]{RT17}.    
    \end{enumerate} 
\end{remark}

These invariants are strongly constrained, as the following lemma shows.

\begin{lemma}[{\cite[Lemma 3.5 and Lemma 4.1]{RT17}}]\label{lmm:e}
   Suppose $[A] \in \mcA(K,g,\ell)$. Then, for every sufficiently large $\ell$, the following assertions hold:
   \begin{enumerate}
    \item $\gcd(e, \ell-1)=\gcd(\frac{e}{2}, \ell-1)$.
    \item $4 \mid e$.
    \item for any $1 \leq r,s \leq 2g$, we have 
    \[
    \frac{e}{2}(i_{r}+i_{s}-1) \equiv 0 \bmod \ell-1.
    \] In particular, we have $m_{\mbQ} \mid \gcd(\frac{e}{2}, \ell-1)$.
    \item $\mathrm{ord}_{2}(m_{\mbQ})=\mathrm{ord}_{2}(\ell-1)$. Here $\mathrm{ord}_{2}$ stands for the normalized $2$-adic valuation.
   \end{enumerate}
\end{lemma}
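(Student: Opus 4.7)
The plan is to analyze the inertia action at $\lambda$ on the Tate modules of $A$, taking advantage of the extension $L/K_{\lambda}^{\mathrm{un}}$ from Remark \ref{rmk:e}(1) of degree $e = e_{\lambda}$, over which $A_L$ becomes semistable, and which by Remark \ref{rmk:e}(2) is totally tamely ramified cyclic for $\ell > 2g+1$. Writing $d := \gcd(e, \ell-1)$, the character $\chi|_{G_L}$ has image of order $(\ell-1)/d$ in $\mathbb{F}_{\ell}^{\times}$, and the cyclic quotient $\gal(L/K_{\lambda}^{\mathrm{un}}) \cong \mbZ/e\mbZ$ acts on the semistabilized Tate module.

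For (3), I would exploit the Grothendieck monodromy filtration on $V_{\ell}(A_L)$ and its compatibility with the action of $\gal(L/K_{\lambda}^{\mathrm{un}})$. The filtration has graded pieces coming from the torus part, the abelian quotient (with good reduction), and the cocharacter quotient; the requirement that $A[\ell]^{\mathrm{ss}} \cong \bigoplus_{r} \chi^{i_r}$ as a $G_{\mbQ}$-module (from Lemma \ref{lmm:mod-ell}) forces a level-$1$ fundamental character structure on each piece and thereby controls the characters $\chi^{i_r}|_{I_L}$. The symplectic Weil pairing $V_\ell(A) \times V_\ell(A) \to \mbQ_{\ell}(1)$ pairs $\chi^{i_r}$ with $\chi^{1-i_r}$, and exploiting its compatibility with the $\gal(L/K_{\lambda}^{\mathrm{un}})$-action yields the congruence $\frac{e}{2}(i_r + i_s - 1) \equiv 0 \pmod{\ell-1}$ for every $r, s$. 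The sharp coefficient $e/2$ rather than $e$ arises from the symplectic structure halving the effective index under the cyclic action. The ``in particular'' statement then follows by setting $s = r$: the order of $\chi^{2i_r - 1}$ divides $e/2$, hence $m_{\mbQ}$ divides $\gcd(e/2, \ell-1)$.

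For (1), the equality $\gcd(e, \ell-1) = \gcd(e/2, \ell-1)$ (equivalent to $v_2(e) > v_2(\ell-1)$) emerges from the same analysis, since the sharp coefficient $e/2$ in (3) combined with the inertia structure forces the $2$-adic refinement. Statement (2) then follows from (1) because $v_2(\ell-1) \geq 1$ for odd $\ell$, hence $v_2(e) \geq 2$, i.e., $4 \mid e$. Statement (4) is a direct calculation: unwinding the definition one finds $m_{\mbQ} = (\ell-1)/\gcd(\ell-1, \gcd_{r}(2i_r - 1))$, and since every $2i_r - 1$ is odd, the inner gcd is odd, so $\mathrm{ord}_2(m_{\mbQ}) = \mathrm{ord}_2(\ell-1)$.

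The main obstacle is part (3): the delicate bookkeeping needed to extract the sharp coefficient $e/2$ from the interplay of the Grothendieck monodromy filtration, the cyclic action of $\gal(L/K_{\lambda}^{\mathrm{un}})$, and the symplectic Weil pairing after reduction modulo $\ell$. This requires careful tracking of fundamental characters of $I_L$ and their compatibility with the symplectic structure under the tame extension, which is the technical heart of the argument; once (3) is in hand, (1), (2), and (4) follow with comparatively little effort.
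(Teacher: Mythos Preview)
The paper does not give its own proof of this lemma; it is quoted verbatim from \cite[Lemma 3.5 and Lemma 4.1]{RT17}, as the opening of Section~\ref{sec:2} makes explicit. So there is no in-paper argument to compare against, but your sketch can still be assessed on its own terms.

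Parts (1), (2), and (4) are handled correctly. In particular, your observation that (4) is immediate from the oddness of $2i_r-1$ is right, and deducing (2) from (1) via $v_2(e)>v_2(\ell-1)\ge 1$ is clean. Likewise (1) can indeed be read off from the $r=s$ case of (3), since $\tfrac{e}{2}(2i_r-1)\equiv 0\pmod{\ell-1}$ with $2i_r-1$ odd forces $v_2(\ell-1)\le v_2(e/2)$.

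Part (3), however, has a genuine gap. You base the argument on the Grothendieck monodromy filtration for the semistable model over $L$, with its toric/abelian/cocharacter pieces. But for $\ell$ sufficiently large, Lemma~\ref{lmm:goodred} (which in \cite{RT17} is established \emph{before} the present lemma) says $A$ has potentially \emph{good} reduction with $\ell$-rank zero at $\lambda$; the monodromy filtration is therefore trivial and carries no information. The actual mechanism producing the coefficient $e/2$ is not ``the symplectic structure halving the effective index'' but the supersingularity: over $L$ (totally ramified of degree $e$) the finite flat group scheme $A[\ell]$ has Dieudonn\'e module of pure slope $1/2$, so tame inertia $I_L$ acts on every Jordan--H\"older factor through $\psi^{e/2}$, where $\psi$ is the level-$1$ fundamental character of $I_L$ and $\chi|_{I_L}=\psi^{e}$. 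Comparing with $\chi^{i_r}|_{I_L}=\psi^{e i_r}$ gives $e i_r\equiv e/2\pmod{\ell-1}$, i.e.\ the $r=s$ case of (3); the general case then follows by the elementary manipulation you would expect. The Weil pairing by itself only yields the duality $i_r+i_{r'}\equiv 1$ for paired indices $r\leftrightarrow r'$, which is strictly weaker and does not give the diagonal case. Without invoking the $\ell$-rank zero hypothesis somewhere, the sharp $e/2$ cannot be obtained.
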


This lemma enables them to show that traces of various powers of Frobenius elements at small primes take very specific forms. One of typical results is  as follows:

\begin{proposition}[{{\cite[Proposition 4.2]{RT17}}}]\label{prp:4.2}
    Suppose $\ell$ is sufficiently large and $[A] \in \mcA(\mbQ,g,\ell)$. For any prime $p<\frac{\ell}{4g}$, $\chi(m_{\mbQ})(\mathrm{Frob}_{p}) \neq 1$.
\end{proposition}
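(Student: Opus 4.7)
The plan is to argue by contradiction: assume $\chi(m_{\mbQ})(\mathrm{Frob}_{p})=1$ and derive a contradiction from the hypothesis $p<\ell/(4g)$. The starting point is the definition of $m_{\mbQ}$ as the order of the image of $\sigma\mapsto(\chi^{2i_{r}-1}(\sigma))_{r}$, which says that $(\ell-1)/m_{\mbQ}$ divides each $2i_{r}-1$. Hence, writing $p=\beta^{m_{\mbQ}}$ for some $\beta\in\mathbb{F}_{\ell}^{\times}$, we obtain $p^{2i_{r}-1}\equiv 1\pmod{\ell}$ for every $r$. Fixing a square root $\alpha$ of $p$ in $\mathbb{F}_{\ell}^{\times}$, this means $p^{i_{r}}\in\{\alpha,-\alpha\}$ for every $r$.

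The next step would be to combine this rigidity with the Weil bound applied to $\mathrm{Frob}_{p}^{2}$. Let $P(X)\in\mathbb{Z}[X]$ be the characteristic polynomial of $\mathrm{Frob}_{p}$ on $T_{\ell}A$, with roots $\alpha_{1},\dots,\alpha_{2g}$ of absolute value $\sqrt{p}$. By Lemma \ref{lmm:mod-ell}, $P(X)\equiv\prod_{r}(X-p^{i_{r}})\pmod{\ell}$, and together with the previous step this yields $\mathrm{tr}(\mathrm{Frob}_{p}^{2})\equiv 2gp\pmod{\ell}$. On the other hand, $|\mathrm{tr}(\mathrm{Frob}_{p}^{2})|=|\sum_{i}\alpha_{i}^{2}|\leq 2gp$. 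For $p<\ell/(4g)$, the integer $\mathrm{tr}(\mathrm{Frob}_{p}^{2})-2gp$ has absolute value strictly less than $\ell$ and is divisible by $\ell$, hence vanishes. The equality case of the triangle inequality (the $\alpha_{i}^{2}$ have common modulus $p$ and sum to the positive real $2gp$) then forces $\alpha_{i}^{2}=p$ for every $i$. Combined with Galois invariance of $P(X)$, which equalises the multiplicities of $\sqrt{p}$ and $-\sqrt{p}$, this gives the clean identity $P(X)=(X^{2}-p)^{g}$ in $\mathbb{Z}[X]$.

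To close the argument, I would invoke Lemma \ref{lmm:e}(4), which asserts $\mathrm{ord}_{2}(m_{\mbQ})=\mathrm{ord}_{2}(\ell-1)$. This makes $(\ell-1)/m_{\mbQ}$ odd, so $p=\beta^{m_{\mbQ}}$ has odd multiplicative order in $\mathbb{F}_{\ell}^{\times}$; in particular, the subgroup $\langle p\rangle$ does not contain $-1$, so exactly one of the two square roots $\pm\alpha$ of $p$ lies in $\langle p\rangle$. Since each $p^{i_{r}}$ is simultaneously in $\langle p\rangle$ and in $\{\pm\alpha\}$, after relabelling we obtain $p^{i_{r}}=\alpha$ for every $r$, and hence $P(X)\equiv(X-\alpha)^{2g}\pmod{\ell}$. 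But the identity $P(X)=(X^{2}-p)^{g}$ from the previous step also gives $P(X)\equiv(X-\alpha)^{g}(X+\alpha)^{g}\pmod{\ell}$; comparing the coefficient of $X^{2g-1}$ yields $-2g\alpha\equiv 0\pmod{\ell}$, which is impossible for $\ell>2g$ since $\alpha\neq 0$.

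The step I expect to be most delicate is the synchronisation $p^{i_{r}}=\alpha$ for all $r$. Without the $2$-adic input of Lemma \ref{lmm:e}(4) one only has $p^{i_{r}}\in\{\pm\alpha\}$, which is perfectly compatible with the even distribution $a=b=g$ forced by $P(X)=(X^{2}-p)^{g}$; it is precisely the oddness of $|\langle p\rangle|$ that makes all the $p^{i_{r}}$ coincide and turns two a priori independent descriptions of $P\bmod\ell$ into an actual contradiction.
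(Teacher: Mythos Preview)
Your argument is correct. Each step checks out: the identification of $(\mathbb{F}_\ell^\times)^{m_{\mbQ}}$ with the kernel of $x\mapsto(x^{2i_r-1})_r$ (via cyclicity of $\mathbb{F}_\ell^\times$) gives $p^{2i_r-1}\equiv 1$; passing to $\mathrm{Frob}_p^2$ and invoking the Weil bound under $4gp<\ell$ yields $P(X)=(X^2-p)^g$; and the odd-order trick from Lemma~\ref{lmm:e}(4) forces all $p^{i_r}$ to coincide, producing the mismatch $(X-\alpha)^{2g}\equiv (X-\alpha)^g(X+\alpha)^g\pmod\ell$.

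The paper does not supply its own proof of Proposition~\ref{prp:4.2}; it simply cites \cite[Proposition~4.2]{RT17}. Your proof is essentially the argument of Rasmussen--Tamagawa, and is in the same spirit as the sample computations the paper does display (the $m_{\mbQ}=6$ case after Proposition~\ref{prp:mq6}, and Propositions~\ref{prp:mq8}--\ref{prp:mq2p}): squeeze a trace of a power of Frobenius between the Weil bound and a mod-$\ell$ congruence, then exploit the resulting binomial identity on the characteristic polynomial. The one structural difference is that those illustrative arguments derive contradictions from the \emph{degrees} of irreducible factors of $T^{2k}\pm p^k$ over $\mbQ$ (hence require parity hypotheses on $g$), whereas your endgame compares two mod-$\ell$ factorizations of $P(X)$ and works for all $g$---which is exactly what is needed for Proposition~\ref{prp:4.2}. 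Your closing remark correctly identifies Lemma~\ref{lmm:e}(4) as the indispensable input: without the oddness of $(\ell-1)/m_{\mbQ}$ the two descriptions of $P\bmod\ell$ would be consistent.
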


Combining this proposition with Elliott's result \cite[Theorem]{El71} on the least prime $k$-th power residue, they obtained the following result:

\begin{proposition}[{\cite[Proposition 4.4]{RT17}}]\label{prp:mq6}
    Suppose $\ell$ is sufficiently large. Then we have $m_{\mbQ}>6$ for every $[A] \in \mcA(\mbQ, g, \ell)$.
\end{proposition}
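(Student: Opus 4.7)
The plan is to argue by contradiction: suppose $m_{\mbQ} \leq 6$ for some $[A] \in \mcA(\mbQ,g,\ell)$ with $\ell$ arbitrarily large. The strategy combines the $2$-adic constraint on $m_{\mbQ}$ supplied by Lemma \ref{lmm:e}(4) with Elliott's unconditional bound on the least prime $k$-th power residue modulo $\ell$, and then invokes Proposition \ref{prp:4.2} to produce a contradiction.

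First, I would apply Lemma \ref{lmm:e}(4), which gives $\mathrm{ord}_{2}(m_{\mbQ}) = \mathrm{ord}_{2}(\ell-1)$. Since $\ell$ is odd, this already forces $m_{\mbQ}$ to be even, eliminating the cases $m_{\mbQ} \in \{1,3,5\}$. If $\ell \equiv 1 \pmod 8$ then $8 \mid m_{\mbQ}$, so $m_{\mbQ} > 6$ automatically, and one may assume $\ell \not\equiv 1 \pmod 8$. The only cases left to rule out are therefore $m_{\mbQ} \in \{2, 6\}$, which force $\ell \equiv 3 \pmod 4$, and $m_{\mbQ} = 4$, which forces $\ell \equiv 5 \pmod 8$.

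In each of these cases, I would invoke Elliott's theorem on the least prime $k$-th power residue modulo $\ell$: for any fixed $k \geq 2$, this quantity is bounded by $\ell^{c}$ for some absolute constant $c < 1$ (uniformly in $\ell$). In particular, the least prime $m_{\mbQ}$-th power residue modulo $\ell$ is smaller than $\ell/(4g)$ once $\ell$ is sufficiently large, with a threshold depending only on $g$. Such a prime $p < \ell/(4g)$ satisfies $p^{(\ell-1)/m_{\mbQ}} \equiv 1 \pmod{\ell}$, equivalently $\chi(m_{\mbQ})(\mathrm{Frob}_{p}) = 1$, directly contradicting Proposition \ref{prp:4.2}.

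The main point requiring care is the uniformity of the threshold on $\ell$: one must ensure that the minimum $\ell$ beyond which Elliott's bound yields a prime below $\ell/(4g)$ depends only on $g$ and not on the particular class $[A]$. Since Elliott's estimate is purely analytic and concerns only the multiplicative structure of $\mathbb{F}_{\ell}^{\times}$, this uniformity is automatic, and the entire argument reduces to a finite case analysis over the three exceptional values $m_{\mbQ} \in \{2, 4, 6\}$.
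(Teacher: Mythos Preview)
Your proposal is correct and matches the paper's approach: the paper states that Proposition~\ref{prp:mq6} is obtained by combining Proposition~\ref{prp:4.2} with Elliott's bound on the least prime $k$-th power residue, which is exactly your argument (your preliminary reduction to $m_{\mbQ}\in\{2,4,6\}$ via Lemma~\ref{lmm:e}(4) is a clean way to organize the cases, though not strictly required). The paper additionally sketches an \emph{alternative} argument for the special case $m_{\mbQ}=6$ with $g$ odd---taking a small prime quadratic residue $p$ with $p^{3}<\ell/4g$ and exploiting the factorization $T^{6}-p^{3}=(T^{2}-p)(T^{4}+pT^{2}+p^{2})$ to force $4\mid 2g$---but this is presented as a prototype for the techniques of \S\ref{subsec:3.1}, not as a replacement for the Elliott-based proof you give.
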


In the following, we give a slightly different proof for the case where $m_{\mbQ}=6$ and $g$ is odd. This argument is one of the prototypes of the results proved in the next section.

Suppose $\ell$ is sufficiently large, $[A] \in \mcA(\mbQ, g, \ell)$ and $m_{\mbQ}=6$. By Elliott's theorem \cite[Theorem]{El71}, there exists a prime quadratic residue $p$ modulo $\ell$ with $p^{3}<\ell/4g$. We have 
    \[
    \chi^{3(2i_{r}-1)}(\mathrm{Frob}_{p}) = 1 
    \quad (r=1, \dots, 2g).
    \]
    Let us consider the trace $a_{p,6}$ of $\mathrm{Frob}_{p}^{6}$ acting on $V_{\ell}(A)$. Then we have
    \begin{align*}
    a_{p,6} \equiv \sum_{r=1}^{2g} \chi^{6i_{r}}(\mathrm{Frob}_{p})
    = \left( \sum_{r=1}^{2g} \chi^{3(2i_{r}-1)}(\mathrm{Frob}_{p}) \right)p^{3} 
    = 2gp^{3} \bmod \ell.
    \end{align*}
    Since $|a_{p,6}| \leq 2gp^{3}$ by the Weil conjecture, it follows that $a_{p,6}=2gp^{3}$. 

    Let $\alpha_{p,1}, \dots, \alpha_{p,2g} \in \bar{\mathbb{Z}}$ be the roots of the characteristic polynomial of $\mathrm{Frob}_{p}$. That $a_{p,6}=2gp^{3}$ is equivalent to saying
    \[
    \alpha_{p,r}^{6}=p^{3} \quad (r=1, \dots, 2g),
    \] and one can observe that the irreducible decomposition of the polynomial $T^{6}-p^{3}$ is given by 
    \[
    T^{6}-p^{3}=(T^{2}-p)(T^{4}+pT+p^{2}).
    \] 
    Hence the characteristic polynomial of $\mathrm{Frob}_{p}$ is a product of $(T^{2}-p)^{2}$ and $T^{4}+pT+p^{2}$ since every real root of a $p$-Weil polynomial has even multiplicity. In particular, the degree of the characteristic polynomial must be a multiple of $4$, which is a contradiction since the degree is $2g$.

\subsection{Further constraints from special fibers}\label{sec:2.2}

In this subsection, we explain several results obtained in \cite[\textsection 5-6]{RT17} obtained via a detailed analysis of the action of the inertia subgroup on the special fibers of N\'eron models. First of all, we record the following result, which is stated before Lemma 3.5 of \cite{RT17}:

\begin{lemma}\label{lmm:goodred}
    Suppose $\ell$ is sufficiently large. If $[A] \in \mcA(K,g,\ell)$, then $A$ has potentially good reduction with $\ell$-rank zero at every place $\lambda$ of $K$ above $\ell$.
\end{lemma}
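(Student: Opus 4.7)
The plan is to work locally at a place $\lambda \mid \ell$, since the definition of $\mcA(K, g, \ell)$ already supplies good reduction at all other places and the claim is vacuous there. The essential structural input is the shape of $\rho_{A,\ell}|_{G_{K_\lambda}}$ supplied by Lemma \ref{lmm:mod-ell}: upper triangular with diagonal characters $\chi^{i_1}, \ldots, \chi^{i_{2g}}$, all powers of the mod-$\ell$ cyclotomic character.

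To establish potential good reduction, I would invoke Grothendieck's semistable reduction theorem to pass to a finite tame extension $L/K_\lambda^{\mathrm{un}}$ of degree $e_\lambda$, as in Remark \ref{rmk:e}, over which $A_L$ acquires semistable reduction, and write the identity component of the special fibre of the N\'eron model as an extension of an abelian variety $B$ by a torus $T$ of dimension $t$. Potential good reduction is equivalent to $t = 0$. If one had $t > 0$, the character lattice of $T$ would produce a $G_L$-stable copy of $\mbZ_\ell(1)^t$ inside $T_\ell(A)$, and Grothendieck's monodromy pairing would give a nonzero nilpotent monodromy operator on $V_\ell(A)$. Reducing modulo $\ell$, the semisimplification of $A[\ell]$ would contain both $\chi$ and the trivial character with multiplicity at least $t$, connected by a ramified extension class reflecting the monodromy. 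This extension class is incompatible with the pro-$\ell$ hypothesis: the subspace of $H^1(G_{K_\lambda}, \mathbb{F}_\ell(\text{twist}))$ which trivialises over a pro-$\ell$ extension of $K(\zeta_\ell)$ is, for $\ell$ large, too small to accommodate the monodromy-induced extension, yielding the desired contradiction.

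Given potential good reduction, I would argue $\ell$-rank zero as follows. Over $L$ the reduction $\bar{A}$ has some $\ell$-rank $r$; the \'etale quotient $A_L[\ell]^{\mathrm{et}}$ is then an unramified $G_L$-module of $\mathbb{F}_\ell$-dimension $r$. Matching this against the filtration of Lemma \ref{lmm:mod-ell}, $r$ of the characters $\chi^{i_j}$ must become trivial on $I_\lambda$; since $\chi|_{I_\lambda}$ surjects onto $\mathbb{F}_\ell^{\times}$ for $\ell$ large, this forces $i_j = 0$ in $\mbZ/(\ell-1)\mbZ$ for those indices. Hence if $r > 0$, the semisimplification of $A[\ell]|_{G_K}$ contains the trivial character with multiplicity $\geq r$. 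One then applies the Weil bound $|a_p| \leq 2g\sqrt{p}$ at an auxiliary prime $p \neq \ell$ of good reduction, where $a_p \equiv r + \sum_{i_{j'} \neq 0} p^{i_{j'}} \pmod{\ell}$; choosing $p$ small via an effective Chebotarev argument yields an inconsistency for $\ell$ sufficiently large.

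The main obstacle is the first step: translating the Grothendieck monodromy filtration into a precise mod-$\ell$ Galois-cohomology statement that really conflicts with the pro-$\ell$ hypothesis. Controlling the relevant $H^1$ subspaces in a way sensitive to the pro-$\ell$ condition requires some care, but once achieved, the $\ell$-rank zero step reduces to routine Weil-bound arithmetic.
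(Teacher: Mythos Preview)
The paper does not give a proof of this lemma; it simply records the statement and cites it as appearing before Lemma~3.5 of \cite{RT17}. So there is no in-paper argument to compare against, only the external reference.

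Your sketch, however, has substantive gaps in both halves.

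For potential good reduction, you propose that the monodromy-induced extension class lies in a part of $H^{1}$ that is ``incompatible with the pro-$\ell$ hypothesis,'' but you flag this yourself as the main obstacle and give no mechanism. In fact the unipotent monodromy on $T_{\ell}(A)$ factors through the tame $\mbZ_{\ell}(1)$-quotient of $I_{L}$, which is already pro-$\ell$; concretely, for a Tate-type piece the relevant extension class is that of the period $q$ in $K_{\lambda}^{\times}/(K_{\lambda}^{\times})^{\ell}$, and the splitting field $K_{\lambda}(\zeta_{\ell},q^{1/\ell})/K_{\lambda}(\zeta_{\ell})$ has degree dividing $\ell$. So a nontrivial monodromy operator is not, on its face, in tension with $K(A[\ell^{\infty}])/K(\zeta_{\ell})$ being pro-$\ell$, and your outline does not say how you would extract a contradiction.

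For $\ell$-rank zero, the step ``$r$ of the characters $\chi^{i_{j}}$ must become trivial on $I_{\lambda}$'' is not justified. The connected--\'etale sequence is over $O_{L}$, so the \'etale quotient is only an unramified $G_{L}$-module: you learn that $\chi^{i_{j}}|_{I_{L}}=1$ for $r$ indices, not that $\chi^{i_{j}}|_{I_{\lambda}}=1$. Since $L/K_{\lambda}^{\mathrm{un}}$ is totally ramified of degree $e_{\lambda}$, this only forces the order of $\chi^{i_{j}}|_{I_{\lambda}}$ to divide $e_{\lambda}$, i.e.\ $i_{j}$ lies in the subgroup $\tfrac{\ell-1}{\gcd(e_{\lambda},\ell-1)}\cdot\mbZ/(\ell-1)\mbZ$, which in general has more than one element. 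Your subsequent Weil-bound step assumes the trivial character genuinely occurs among the $\chi^{i_{j}}$ as a $G_{K}$-character, so it does not go through as written. The argument in \cite{RT17} handles both conclusions by different means (closer to Raynaud/Fontaine-type control on finite flat group schemes at $\lambda$), not via the global $H^{1}$ and Chebotarev manoeuvres you sketch.
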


Suppose $\ell$ is sufficiently large, $[A] \in \mcA(K,g,\ell)$, and $\lambda$ is a place of $K$ above $\ell$. By Lemma \ref{lmm:goodred}, we may assume that $A$ has potentially good reduction at $\lambda$ with $\ell$-rank zero. 

Let $L/K_{\lambda}^{\mathrm{ur}}$ be the totally tamely ramified extension of degree $e_{\lambda}$. Then $A_{L}$ has good reduction by Remark \ref{rmk:e}. We denote its N\'eron model by $\mathcal{A}_{L}$, which is an abelian scheme over $O_{L}$. Since the natural action of the Galois group $\gal(L/K_{\lambda}^{\mathrm{ur}}) \cong \mbZ/e_{\lambda}\mbZ$ on $A_{L}$ uniquely extends to that on $\mathcal{A}_{L}$,  it induces the action on its special fiber denoted by $\bar{\mathcal{A}}_{L}$. Moreover, since the action of $\gal(L/K_{\lambda}^{\mathrm{ur}})$ on $\mathcal{A}_{L}$ is compatible with the natural action on $\mathrm{Spec}(O_{L})$, the action of $\gal(L/K_{\lambda}^{\mathrm{ur}})$ on $\bar{\mathcal{A}}_{L}$ preserves its $\bar{\mathbb{F}}_{\lambda}$-structure. 

Rasmussen and Tamagawa studied a decomposition of the special fiber $\bar{\mathcal{A}}_{L}$ associated to the action of 
\[
\mbZ[\gal(L/K_{\lambda}^{\mathrm{ur}})] \cong \mbZ[X]/(X^{e_{\lambda}}-1) \cong \prod_{d \mid e_{\lambda}} \mbZ[X]/(\Phi_{d}(X)),
\] where $\Phi_{d}(X)$ is the $d$-th cyclotomic polynomial. Let $\mathfrak{a}_{d}$ be the image of the ideal $ (\Phi_{d}(X)) \subset \mathbb{Z}[X]$ in $\mathrm{End}(\bar{\mathcal{A}}_{L})$, and define $B_{d} \coloneqq \bar{\mathcal{A}}_{L}/\mathfrak{a}_{d}\bar{\mathcal{A}}_{L}$, which is an abelian variety whose dimension is denoted by $g_{d}$.

\begin{proposition}[{\cite[Proposition 6.2 and Proposition 6.7 (C4)$\Rightarrow$ (C8)]{RT17}}]\label{prp:key} 
    Suppose  $\ell$ is sufficiently large, $[A] \in \mcA(K,g,\ell)$ and $\lambda$ is a place of $K$ above $\ell$. The following assertions hold.
    \begin{enumerate}
        \item Let $\varphi$ be Euler's totient function. There exist nonnegative integers $n_{d}$, indexed by the divisor $d$ of $e_{\lambda}$, such that
        \[
        2g=\sum_{d \mid e_{\lambda}} n_{d}\varphi(d)
        \quad \text{and} \quad e_{\lambda}=\mathrm{lcm}(d \mid n_{d}>0).
        \]
        \item We have
        \[
        n_{d}f_{\lambda}f_{\lambda/\ell} \neq 1.
        \] Here, $f_{\lambda}$ is the order of $\ell^{f_{\lambda/\ell}} \bmod d \in (\mbZ/d\mbZ)^{\times}$. In particular, if $K=\mbQ$, then $n_{d}=1$ implies $\ell \neq 1 \bmod d$.
    \end{enumerate}
\end{proposition}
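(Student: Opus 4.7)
The plan is to exploit the cyclotomic factorization $\mbZ[X]/(X^{e_{\lambda}}-1) \cong \prod_{d \mid e_{\lambda}} \mbZ[X]/(\Phi_{d}(X))$ already introduced in the excerpt, together with the Weil / Tate-module arguments of Rasmussen--Tamagawa, to convert the action of $\gal(L/K_{\lambda}^{\mathrm{ur}})$ on $\bar{\mathcal{A}}_{L}$ into numerical constraints on $(g,e_{\lambda})$, and then to play these numerical constraints off against the cyclotomic constraints coming from Lemma \ref{lmm:mod-ell}.

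For part (1), I would first invoke Lemma \ref{lmm:goodred} to place ourselves in the setting where $A_{L}$ has good reduction, so that $\bar{\mathcal{A}}_{L}$ is an abelian variety of dimension $g$. Passing to the rational endomorphism algebra, the cyclotomic factorization produces orthogonal idempotents in $\mathrm{End}^{0}(\bar{\mathcal{A}}_{L})$, and hence an isogeny $\bar{\mathcal{A}}_{L} \sim \prod_{d \mid e_{\lambda}} B_{d}$. Each factor $B_{d}$ carries a faithful action of $\mbZ[\zeta_{d}]$, and therefore $\mbQ(\zeta_{d})$ acts on $V_{\ell'}(B_{d})\otimes\mbQ_{\ell'}$ for an auxiliary prime $\ell'\neq\ell$; since $[\mbQ(\zeta_{d}):\mbQ]=\varphi(d)$, this forces $\varphi(d)\mid 2g_{d}$, and writing $2g_{d}=n_{d}\varphi(d)$ and summing yields $2g=\sum_{d}n_{d}\varphi(d)$. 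For the second identity, I would note that a generator $\sigma$ of $\gal(L/K_{\lambda}^{\mathrm{ur}})$ has order exactly $e_{\lambda}$ on $V_{\ell'}(A_{K_{\lambda}})^{\mathrm{ss}}$ by the very definition of $e_{\lambda}$, while its order on $V_{\ell'}(B_{d})$ with $n_{d}>0$ is precisely $d$ by construction; hence the overall order is $\mathrm{lcm}(d\mid n_{d}>0)$.

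For part (2), I would argue by contradiction, assuming $n_{d}=f_{\lambda}=f_{\lambda/\ell}=1$. Under these equalities, $B_{d}$ is an abelian variety of dimension $\varphi(d)/2$ defined over $\mathbb{F}_{\lambda}=\mathbb{F}_{\ell}$, carries a $\mbZ[\zeta_{d}]$-action, and satisfies $d\mid\ell-1$. The geometric Frobenius of $B_{d}$ over $\mathbb{F}_{\ell}$ then commutes with the CM action, so its characteristic polynomial on $V_{\ell'}(B_{d})$ has roots forming a single $\gal(\mbQ(\zeta_{d})/\mbQ)$-orbit of a Weil integer $\pi$ of absolute value $\sqrt{\ell}$. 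I would then combine this with Lemma \ref{lmm:mod-ell}, which forces the trace modulo $\ell$ of any lift of this Frobenius on $A[\ell]$ to be a sum of powers of $\chi$, together with the congruence constraints of Lemma \ref{lmm:e}, to deduce that the reductions of $\pi$ modulo primes above $\ell$ in $\mbZ[\zeta_{d}]$ satisfy incompatible sets of relations once $\ell\gg 0$, producing the desired contradiction.

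The main obstacle is precisely this bridge in part (2): translating between the $\ell'$-adic Frobenius eigenvalues on the \emph{special} fiber $\bar{\mathcal{A}}_{L}$ (Weil $\ell$-numbers inside $\mbQ(\zeta_{d})$ via the CM action) and the mod-$\ell$ representation on $A[\ell]$ (upper triangular with cyclotomic diagonal by Lemma \ref{lmm:mod-ell}). Matching these two descriptions requires keeping careful track of how the filtration of $A[\ell]^{\mathrm{ss}}$ by powers of $\chi$ restricts to each isotypic piece $B_{d}$, and of how the tame inertia character relates to $\chi$ modulo $m_{\mbQ}$. I expect this to be the step that Rasmussen--Tamagawa spend most effort on in their Proposition 6.7, and that my own argument would essentially amount to unwinding the chain of equivalences they use to establish the implication (C4)$\Rightarrow$(C8).
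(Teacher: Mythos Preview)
Your outline for part (1) is correct and matches the construction the paper sets up immediately before stating the proposition: the isogeny decomposition $\bar{\mathcal{A}}_L \sim \prod_{d\mid e_\lambda} B_d$ coming from the cyclotomic idempotents, the faithful $\mbZ[\zeta_d]$-action on $B_d$ forcing $\varphi(d) \mid 2g_d$, and the order computation for a generator of $\gal(L/K_\lambda^{\mathrm{ur}})$ giving $e_\lambda = \mathrm{lcm}(d \mid n_d>0)$. This is exactly the content of \cite[Proposition 6.2]{RT17} as the paper cites it.

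For part (2), however, there is a genuine gap, and it is one of approach rather than of detail. The paper does not give its own proof here but cites \cite{RT17}, and the one substantive remark it makes is that the hypothesis (C4) in Rasmussen--Tamagawa's Proposition 6.7 is precisely that \emph{the $\ell$-rank of $B_d$ is zero}, which is supplied by Lemma \ref{lmm:goodred}. You never invoke this condition. The actual argument for (C4)$\Rightarrow$(C8) lives entirely on the special fiber: if $n_d = f_\lambda = f_{\lambda/\ell} = 1$ then $B_d$ is an abelian variety over $\mathbb{F}_\ell$ of dimension $\varphi(d)/2$ with CM by $\mbZ[\zeta_d]$, and $f_\lambda=1$ says exactly that $\ell \equiv 1 \bmod d$, so $\ell$ splits completely in $\mbQ(\zeta_d)$. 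One then analyzes the Frobenius $\pi \in \mbZ[\zeta_d]$ directly (it is a Weil $\ell$-number with $\pi\bar\pi=\ell$, and the $\ell$-rank of $B_d$ is read off from the primes above $\ell$ at which $\pi$ is a unit) to see that $B_d$ must have positive $\ell$-rank, contradicting (C4).

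Your proposed route---pulling the contradiction back through Lemma \ref{lmm:mod-ell} and the mod-$\ell$ representation of the \emph{generic} fiber $A[\ell]$---heads in the wrong direction. Lemma \ref{lmm:mod-ell} constrains the global $G_K$-action and has no direct grip on an individual isotypic piece $B_d$ of the special fiber; the ``bridge'' you yourself flag as the main obstacle is not merely delicate but unnecessary, and it is not clear it can be built at all in the form you sketch. The contradiction in \cite{RT17} comes from the $\ell$-rank constraint on $B_d$, not from matching Frobenius traces against powers of $\chi$.
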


Note that the condition (C4) in \cite[Proposition 6.7]{RT17} is that the $\ell$-rank of $B_{d}$ is zero, and is satisfied for every sufficiently large $\ell$ by Lemma \ref{lmm:goodred}. Proposition \ref{prp:key} allows us to prove the Rasmussen--Tamagawa conjecture in several cases, and some of typical applications are as follows:

\begin{example}[{\cite[Proposition 7.1]{RT17}}]
    Let us prove RT($\mbQ,2$) using Proposition \ref{prp:key}. Suppose $\ell \gg 0$ and $[A] \in \mcA(\mbQ, 2, \ell)$. By Proposition \ref{prp:key} (1), we have
    \[
    4=\sum_{d \mid e} n_{d}\varphi(d)
    \quad \text{and} \quad 
    e=\mathrm{lcm}(d \mid n_{d}>0)
    \]
    Moreover, we know that $4$ divides $e$ by Lemma \ref{lmm:e} (2). Since 
    \[
    \{d>0 \mid \varphi(d) \leq 4 \} = \{ 1,2,3,4,5,6,8,10,12  \},
    \]
    at least one of $n_{4}, n_{8}, n_{12}$ must be positive. Moreover, since $\varphi(8)=\varphi(12)=4$, we have
    \[
    4=\varphi(12) 
    \quad \left(\text{resp.} \quad
    4=\varphi(8) \right)
    \] if $n_{12}$ (resp. $n_{8}$) is positive. In both cases, it follows by Lemma \ref{lmm:e} that $m_{\mbQ} \leq 6$, contradicting Proposition \ref{prp:mq6}. If $n_{4}>0$, then we can easily check that $e=4$ or $e=12$, this also contradicts Proposition \ref{prp:mq6}. Hence $\mcA(\mbQ, 2, \ell)$ must be empty for every sufficiently large $\ell$. We note that a similar argument works for RT($\mbQ, 3$) \cite[Proposition 7.2]{RT17}.
\end{example}

\begin{example}[{\cite[Proposition 7.3]{RT17}}]\label{ex:d=4}
    The strategy of using Proposition \ref{prp:key} is successful when $K=\mbQ$ and $g \leq 3$, but it is not for abelian fourfolds. In fact, the conjecture remains open for the following five types of decompositions (and congruences), according to \cite[Proposition 7.3]{RT17}:
    \begin{center}
        \begin{tabular}{c|c}
            Sum & Congruence   \\
            \hline
            $2\varphi(3)+\varphi(8)$ & $\ell \equiv 13 \bmod 24$ \\
            $2\varphi(6)+\varphi(8)$ & $\ell \equiv 13 \bmod 24$ \\
            $\varphi(16)$ & $\ell \equiv 9 \bmod 16$ \\
            $\varphi(20)$ & $\ell \equiv 11 \bmod 20$ \\
            $\varphi(24)$ & $\ell \equiv 13 \bmod 24$ 
        \end{tabular}
    \end{center}
\end{example}

\section{Proof of main result}\label{sec:3}

\subsection{Using small quadratic nonresidues and explicit quadratic residues}\label{subsec:3.1}

We keep the same notation as in the previous section. Recall that, if $\ell$ is sufficiently large and $[A] \in \mcA(\mbQ, g, \ell)$, the integer $m_{\mbQ}$ cannot take $2$, $4$ and $6$. In what follows, we prove that it cannot take certain other values either. To obtain the result, we will use the following classical bound of the least quadratic nonresidue:

\begin{theorem}[Burgess {\cite[Theorem 2]{Bu57}}]\label{thm:nonres}
   For an odd prime $\ell$, let $n_{\ell}$ be the least odd quadratic nonresidue modulo $\ell$, i.e. the least positive integer which is not a quadratic residue modulo $\ell$. Then, for any $\epsilon>0$, we have
   \[
   n_{\ell}=O(\ell^{\frac{1}{4\sqrt{e}}+\epsilon})
   \] where $e=2.718\dots$ is Euler's number.
\end{theorem}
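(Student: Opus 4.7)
The plan is to combine Burgess's celebrated character sum estimate with the Vinogradov smooth number trick. The key external input is Burgess's bound: for any non-principal Dirichlet character $\chi$ modulo a prime $\ell$, every integer $r \geq 1$, and every interval $[M+1, M+N]$,
\[
\left| \sum_{n=M+1}^{M+N} \chi(n) \right| \ll_r N^{1-1/r}\, \ell^{(r+1)/(4r^2)}\, (\log \ell)^{1/r}.
\]
I would apply this with $\chi = \chi_\ell$ the Legendre symbol modulo $\ell$ and with initial interval $[1,N]$, for a parameter $N$ to be chosen.

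The next step exploits the fact that, if $y = n_\ell$, then every odd positive integer below $y$ is a quadratic residue. Consequently, any positive integer $n \leq N$ all of whose odd prime factors lie below $y$ satisfies $\chi_\ell(n) = \chi_\ell(2)^{v_2(n)}$. When $\chi_\ell(2) = +1$, every such $y$-smooth integer contributes $+1$ to the character sum, giving the lower bound
\[
\sum_{n=1}^{N} \chi_\ell(n) \;\geq\; 2\Psi(N,y) - N,
\]
where $\Psi(N,y)$ counts $y$-smooth integers up to $N$. The Dickman--de Bruijn estimate $\Psi(N,y) \sim N\rho(u)$ with $u = \log N/\log y$ and $\rho$ the Dickman function, combined with Burgess's upper bound, yields
\[
N\bigl(2\rho(u) - 1\bigr) \;\ll_r\; N^{1-1/r}\, \ell^{(r+1)/(4r^2)+\epsilon}.
\]
For $u$ small enough that $2\rho(u) - 1$ is bounded away from zero, this rearranges to $y \leq \ell^{(r+1)/(4ru) + \epsilon}$, and an asymptotic optimization in $r$ and $u$ produces the claimed exponent $1/(4\sqrt{e})$, arising from the interplay between the super-polynomial decay $\rho(u) = u^{-u+o(u)}$ and the polynomial Burgess savings $N^{-1/r}$.

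The case $\chi_\ell(2) = -1$ is precisely what makes the \emph{odd} qualifier in the statement nontrivial and must be treated separately: one decomposes the sum according to $v_2(n) = a$, applies the same smooth number argument to each dyadic block (on which the odd part $n/2^a$ automatically has $\chi_\ell = +1$), and observes that the alternating signs $\chi_\ell(2)^a$ contribute only a bounded factor, so the same conclusion survives.

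The main obstacle is the optimization step itself. Balancing the super-polynomial decay of $\rho(u)$ against the merely polynomial Burgess gain $N^{-1/r}$ is a delicate calculus argument, and it is precisely here that Euler's number $e$ emerges in the final exponent; any sloppiness in the choice of parameters will either lose the $1/(4\sqrt{e})$ constant or force an unusable constraint between $N$, $y$, and $r$. Once this saddle point calculation is carried out carefully, the bound $n_\ell = O(\ell^{1/(4\sqrt{e}) + \epsilon})$ follows.
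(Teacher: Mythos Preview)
The paper does not prove this theorem at all: it is quoted as a classical external input, with citation to Burgess \cite[Theorem 2]{Bu57} and a one-line remark that ``the same proof works for the least odd quadratic nonresidue.'' There is therefore no paper proof to compare your proposal against; your sketch is essentially a reconstruction of the standard Burgess--Vinogradov argument, which is indeed how the cited result is proved.

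Your outline is correct in its architecture (Burgess character-sum bound plus the smooth-number lower bound $\sum_{n\le N}\chi(n)\ge 2\Psi(N,y)-N$), but the optimization paragraph is slightly off. One does not balance the super-polynomial decay $\rho(u)=u^{-u+o(u)}$ against the Burgess saving; that asymptotic is for large $u$ and is irrelevant here. Instead, one simply sends $r\to\infty$ so that Burgess gives $\sum_{n\le N}\chi(n)=o(N)$ for $N=\ell^{1/4+\delta}$, and the inequality $2\Psi(N,y)-N\le o(N)$ becomes $\rho(u)\le \tfrac12+o(1)$. Since $\rho(u)=1-\log u$ on $1\le u\le 2$, the threshold $\rho(u)=\tfrac12$ is exactly $u=\sqrt{e}$, and $y=N^{1/u}\le \ell^{(1/4+\delta)/\sqrt{e}+o(1)}$ gives the exponent $1/(4\sqrt{e})$. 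Your treatment of the odd variant via the dyadic decomposition in $v_2(n)$ is fine and matches the spirit of the paper's remark.
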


Note that the least odd quadratic nonresidue modulo $\ell$ is necessarily a prime. Although \cite[Theorem 2]{Bu57} only gives the upper bound of the least quadratic nonresidue, the same proof works for the the least odd quadratic nonresidue.

\begin{proposition}\label{prp:mq8}
     Suppose $\mathrm{ord}_{2}(g) \leq 1$ and $\ell$ is sufficiently large. Then we have $m_{\mbQ} \neq 8$ for every $[A] \in  \mcA(\mbQ, g, \ell)$.
\end{proposition}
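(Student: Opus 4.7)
My plan is to mirror the argument sketched above for $m_{\mbQ} = 6$ with $g$ odd, but to use a small prime quadratic \emph{nonresidue} $p$ in place of a quadratic residue, and to replace $T^6 - p^3$ by $T^8 + p^4$. Suppose for contradiction $[A] \in \mcA(\mbQ, g, \ell)$ has $m_{\mbQ} = 8$. First, Lemma \ref{lmm:e}(4) yields $\mathrm{ord}_2(\ell - 1) = 3$, so $(\ell-1)/8$ is odd. The hypothesis $m_{\mbQ} = 8$ then gives $2 i_r - 1 \equiv \frac{\ell-1}{8} a_r \pmod{\ell-1}$ for some integer $a_r$, and $a_r$ must be odd because, when lifted to $\mbZ$, both $2 i_r - 1$ and $(\ell-1)/8$ are odd.

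By Theorem \ref{thm:nonres}, for every sufficiently large $\ell$ I can pick a prime $p$ that is a quadratic nonresidue modulo $\ell$ with $p^4 < \ell / (4g)$, which is admissible because $1/\sqrt{e} < 1$. For such $p$, writing $8 i_r = 4(2 i_r - 1) + 4$ and using $p^{(\ell-1)/2} \equiv -1 \pmod{\ell}$ together with $a_r$ odd, one obtains
\[
\chi^{8 i_r}(\mathrm{Frob}_p) \equiv p^{4(2 i_r - 1)} \cdot p^4 \equiv (-1)^{a_r} \cdot p^4 \equiv -p^4 \pmod{\ell}.
\]
Hence, by Lemma \ref{lmm:mod-ell}, the trace $a_{p,8}$ of $\mathrm{Frob}_p^8$ on $V_\ell(A)$ satisfies $a_{p,8} \equiv -2 g p^4 \pmod{\ell}$, and the Weil bound $|a_{p,8}| \leq 2 g p^4$ combined with $4 g p^4 < \ell$ forces $a_{p,8} = -2 g p^4$. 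Then $|\alpha_{p,r}^8| = p^4$ and $\sum_r \alpha_{p,r}^8 = -2 g p^4$, so the equality case of the triangle inequality gives $\alpha_{p,r}^8 = -p^4$ for every $r$; thus every Frobenius eigenvalue is a root of $T^8 + p^4$.

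The final, and main, step is to check that $T^8 + p^4$ is irreducible over $\mbQ$ for every prime $p$ (this is the analogue here of the factorization $T^6 - p^3 = (T^2 - p)(T^4 + p T^2 + p^2)$ that underlies the $m_{\mbQ} = 6$ argument). I plan to apply the Vahlen--Capelli criterion: $-p^4$ is not a rational square since it is negative, and $p^2 = 2u^2$ has no rational solution, so $-p^4$ is not of the form $-4 u^4$. Irreducibility then implies that the characteristic polynomial of $\mathrm{Frob}_p$ is a power of $T^8 + p^4$, giving $8 \mid 2g$, i.e., $4 \mid g$, which contradicts $\mathrm{ord}_2(g) \leq 1$. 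Unlike $T^6 - p^3$, the polynomial $T^8 + p^4$ splits off no rational factor, which is precisely why $\mathrm{ord}_2(g) \leq 1$ suffices (rather than needing $g$ odd as in the $m_{\mbQ} = 6$ argument).
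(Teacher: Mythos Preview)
Your proof is correct and follows essentially the same route as the paper: pick a small prime quadratic nonresidue $p$ via Burgess, use $m_{\mbQ}=8$ and the oddness of $2i_r-1$ to show $\chi^{4(2i_r-1)}(\fr_p)=-1$, deduce $a_{p,8}=-2gp^4$ and hence $\alpha_{p,r}^8=-p^4$, and finish with the irreducibility of $T^8+p^4$. The only cosmetic differences are that you invoke Lemma~\ref{lmm:e}(4) explicitly (the paper gets by with just $8\mid \ell-1$ and the oddness of $2i_r-1$), and you cite the Vahlen--Capelli criterion where the paper cites the equivalent \cite[Chapter~VI, Theorem~9.1]{La02}.
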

\begin{proof}
    Assume that $[A] \in  \mcA(\mbQ, g, \ell)$ and $m_{\mbQ}=8$. Note that we have $8 \mid m_{\mathbb{Q}} \mid \ell-1$. By Theorem \ref{thm:nonres}, if $\ell$ is sufficiently large, we can always find a prime $p$ that satisfies the following two conditions:
    \begin{enumerate}
        \item $p^{4}<\ell/4g$, and 
        \item $p$ is a quadratic nonresidue modulo $\ell$.
    \end{enumerate}

    We consider the action of $\fr_{p}^{8}$ on $V_{\ell}(A)$. We denote its trace by $a_{p,8}$ and the roots of the characteristic polynomial by $\alpha_{p,1}, \dots \alpha_{p,2g} \in \bar{\mbZ}$. By the Weil conjecture, we have 
    \[
    |\alpha_{p,r}|=\sqrt{p}
    \quad \text{for $r=1, \dots, 2g$}.
    \] 

    That $p$ is a quadratic nonresidue modulo $\ell$ is equivalent to saying $p^{\frac{\ell-1}{2}} \equiv -1 \bmod \ell$. Since $2i_{r}-1$ is odd, it follows that
    \[
    \chi^{4(2i_{r}-1)}(\fr_{p}) \equiv -1 \bmod \ell 
    \quad \text{for $r=1, \dots, 2g$}.
    \]    
    Hence we have
    \begin{align*}
        a_{p,8} = \sum_{r=1}^{2g} \alpha_{p,r}^{8} \equiv \sum_{r=1}^{2g} \chi^{8i_{r}}(\fr_{p})   
        =  \left( \sum_{r=1}^{2g} \chi^{4(2i_{r}-1)}(\fr_{p}) \right) p^{4} = -2gp^{4} \bmod \ell.       
    \end{align*}

    Since $
        |a_{p,8}| \leq \sum_{r=1}^{2g} |\alpha_{p,r}|^{8}=2gp^{4},
    $ we have $a_{p,8}=-2gp^{4}$ by the first condition on $p$. This equality occurs if and only if $\alpha_{p,r}^{8}=-p^{4}$ for every $r=1, \dots, 2g$. In particular, every root of the characteristic polynomial of $\fr_{p}$ is also a root of $T^{8}+p^{4}$.
    
    Now $T^{8}+p^{4}$ is irreducible over $\mbQ$. This follows, for example, by considering the action of the Galois group $\gal(\mbQ(\zeta_{16}, \sqrt{p})/\mbQ)$ on the set of roots of $T^{8}+p^{4}$. The irreducibility also follows from \cite[Chapter VI, Theorem 9.1]{La02}. Hence the degree of the characteristic polynomial of the Frobenius, which is $2g$, must be divisible by $8$, contradicting the assumption that $\mathrm{ord}_{2}(g) \leq 1$.
\end{proof}

Note that $6<4\sqrt{e}=6.594\dots<7$. That $6<4\sqrt{e}$ is essential to prove the following result, whose proof is similar to the last proposition:

\begin{proposition}\label{prp:mq12}
    Suppose $g$ is odd and $\ell$ is sufficiently large. Then we have $m_{\mbQ} \neq 12$ for every $[A] \in  \mcA(\mbQ, g, \ell)$.
\end{proposition}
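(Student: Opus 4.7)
The plan is to mirror the proof of Proposition~\ref{prp:mq8} with the exponents $(8,4)$ replaced by $(12,6)$. Assume for contradiction that $[A] \in \mcA(\mbQ, g, \ell)$ has $m_{\mbQ} = 12$. Then $12 \mid \ell - 1$, and Lemma~\ref{lmm:e}(4) forces $\mathrm{ord}_{2}(\ell-1) = 2$; writing $\ell - 1 = 4m$ with $m$ odd and using $3 \mid m$ one sees $(\ell-1)/12$ is odd. By Theorem~\ref{thm:nonres} together with the crucial inequality $6 < 4\sqrt{e}$, for every sufficiently large $\ell$ I can choose an \emph{odd} prime $p$ which is a quadratic nonresidue modulo $\ell$ and satisfies $p^{6} < \ell/(4g)$.

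Next I would analyze the trace $a_{p,12}$ of $\fr_{p}^{12}$ on $V_{\ell}(A)$. Writing $12 i_{r} = 6(2i_{r} - 1) + 6$ and using Lemma~\ref{lmm:mod-ell},
\begin{align*}
a_{p,12} \equiv p^{6} \sum_{r=1}^{2g} \chi^{6(2i_{r}-1)}(\fr_{p}) \pmod{\ell}.
\end{align*}
Since $m_{\mbQ} = 12$, one has $2 i_{r} - 1 = k_{r}(\ell-1)/12$ for some integer $k_{r}$, and $k_{r}$ must be odd because both $2i_{r} - 1$ and $(\ell-1)/12$ are odd. As $p$ is a quadratic nonresidue, $\chi^{6(2i_{r}-1)}(\fr_{p}) = p^{k_{r}(\ell-1)/2} = (-1)^{k_{r}} = -1$, giving $a_{p,12} \equiv -2g p^{6} \pmod{\ell}$. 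The Weil bound $|a_{p,12}| \leq 2g p^{6}$ combined with $p^{6} < \ell/(4g)$ then upgrades this to the exact equality $a_{p,12} = -2gp^{6}$, which forces each root $\alpha_{p,r}$ of the characteristic polynomial of $\fr_{p}$ to satisfy $\alpha_{p,r}^{12} = -p^{6}$.

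The final step is to factor
\begin{align*}
T^{12} + p^{6} = (T^{4} + p^{2})(T^{8} - p^{2}T^{4} + p^{4})
\end{align*}
over $\mbQ$ and to show that, for every odd prime $p$, both factors are irreducible. Irreducibility of $T^{4} + p^{2}$ follows from \cite[Chapter VI, Theorem 9.1]{La02}. For $T^{8} - p^{2}T^{4} + p^{4}$, a root is $\beta = \zeta_{24}\sqrt{p}$, and it suffices to show $[\mbQ(\beta):\mbQ] = 8$; since $\mbQ(\beta^{2}) = \mbQ(\zeta_{12})$ has degree $4$, this reduces to $\beta \notin \mbQ(\zeta_{12})$, which I would check via the automorphism $\tau\colon \zeta_{24} \mapsto -\zeta_{24}$ fixing $\mbQ(\zeta_{12})$: for $p = 3$, $\sqrt{3} \in \mbQ(\zeta_{12})$ is fixed by $\tau$ so $\tau(\beta) = -\beta \neq \beta$, and for $p \geq 5$ one extends $\tau$ to $\mbQ(\zeta_{24}, \sqrt{p})$ with $\sqrt{p}$ fixed, again giving $\tau(\beta) = -\beta$. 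Granted this, the characteristic polynomial of $\fr_{p}$ is a product of irreducibles of degrees $4$ and $8$, so $4 \mid 2g$, contradicting $g$ odd.

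The main obstacle is this factorization step for the prime $p = 2$: one has the identity $\zeta_{24}\sqrt{2} = \zeta_{6} + \zeta_{12}^{-1} \in \mbQ(\zeta_{12})$, so $T^{8} - 4T^{4} + 16$ splits further into two quartic factors over $\mbQ$, and $T^{4} + 4 = (T^{2} + 2T + 2)(T^{2} - 2T + 2)$ splits into two quadratics; degree-$2$ irreducibles would then be allowed in the characteristic polynomial, and the divisibility $4 \mid 2g$ would fail. The restriction to the least \emph{odd} quadratic nonresidue in Theorem~\ref{thm:nonres}, combined with the inequality $6 < 4\sqrt{e}$ to secure $p^{6} < \ell/(4g)$, is exactly what circumvents this bad case.
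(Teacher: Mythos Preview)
Your argument is correct and follows the same route as the paper: choose a small odd prime quadratic nonresidue via Burgess, compute $a_{p,12}\equiv -2gp^{6}$, use the Weil bound to force $\alpha_{p,r}^{12}=-p^{6}$, and then exploit the factorization of $T^{12}+p^{6}$ into irreducibles of degrees $4$ and $8$ to contradict $g$ odd. Your write-up is in fact slightly more careful in two places: you explicitly invoke Lemma~\ref{lmm:e}(4) to justify that $(\ell-1)/12$ is odd (hence each $k_{r}$ is odd and $\chi^{6(2i_{r}-1)}(\fr_{p})=-1$), and you correctly record the second factor as $T^{8}-p^{2}T^{4}+p^{4}$ (the paper's displayed $T^{8}-p^{2}T^{2}+p^{4}$ is a typo). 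Your irreducibility argument for the degree-$8$ factor via $[\mbQ(\zeta_{24}\sqrt{p}):\mbQ]=8$ is a minor rephrasing of the paper's transitivity argument on $\{\zeta_{8}^{2m+1}\zeta_{3}^{\pm1}\sqrt{p}\}$, not a genuinely different method.
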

\begin{proof}
    By Theorem \ref{thm:nonres}, if $\ell$ is sufficiently large, we can always find an \emph{odd} prime $p$ that satisfies the following two conditions:
    \begin{enumerate}
        \item $p^{6}<\ell/4g$, and 
        \item $p$ is a quadratic nonresidue modulo $\ell$.
    \end{enumerate}

    Consider the trace $a_{p,12}$ of $\fr_{p}^{12}$ acting on $V_{\ell}(A)$ and denote the roots of the characteristic polynomial by $\alpha_{p,1}, \dots \alpha_{p,2g}$. We have
    \[
    \chi^{6(2i_{r}-1)}(\fr_{p}) \equiv -1 \bmod \ell 
    \quad \text{for $i=1, \dots, 2g$},
    \] and hence 
    \begin{align*}
        a_{p,12} &= \sum_{r=1}^{2g} \alpha_{p,r}^{12} \equiv \sum_{r=1}^{2g} \chi^{12i_{r}}(\fr_{p})  \bmod \ell \\
        &=  \left( \sum_{r=1}^{2g} \chi^{6(2i_{r}-1)}(\fr_{p}) \right) p^{6} = -2gp^{6} \bmod \ell.     
    \end{align*}
    Since $|a_{p,12}| \leq \sum_{r=1}^{2g} |\alpha_{p,r}|^{12}=2gp^{6}$ and $p^{6}<\frac{\ell}{4g}$, it follows that $a_{p,12}=-2gp^{6}$. Moreover, this equality occurs if and only if $\alpha_{p,r}^{12}=-p^{6}$ for every $r=1, \dots, 2g$. 
    
    Now we claim that
    \[
    T^{12}+p^{6}=(T^{4}+p^{2})(T^{8}-p^{2}T^{2}+p^{4})
    \] gives the irreducible decomposition of $T^{12}+p^{6}$ over $\mbQ$\footnote{we use $p \neq 2$ here since $T^{4}+2^{2}=(T^{2}+2T+2)(T^{2}-2T+2)$ is not irreducible.}. The irreducibility of $T^{4}+p^{2}$ follows from \cite[Chapter VI, Theorem 9.1]{La02}. Regarding that of $T^{8}-p^{2}T^{2}+p^{4}$, consider the action of $\gal(\mbQ(\zeta_{24}, \sqrt{p})/\mbQ)$ on the set 
    \[
    \{ \zeta_{8}^{2m+1} \zeta_{3}^{\pm 1} \sqrt{p} \mid m=0,1,2,3 \}
    \] of roots of $T^{8}-p^{2}T^{2}+p^{4}$. If $p \neq 3$, then  $\mbQ(\zeta_{8})$, $\mbQ(\zeta_{3})$ and $\mbQ(\sqrt{p})$ are linearly disjoint, and this immediately implies the transitivity of the action, and hence the desired irreducibility follows. When $p=3$, it is easy to check that $T^{8}-3^{2}T^{2}+3^{4}$ is irreducible. Hence the degree of the characteristic polynomial must be divisible by $4$, which is a contradiction.
\end{proof}

Lastly, we use explicit quadratic residues to exclude some possible values of $m_{\mbQ}$:

\begin{proposition}\label{prp:mq2p}
    Suppose $g$ is odd, $p$ is a prime congruent to $1$ modulo $4$ and $\ell$ is sufficiently large. Then we have $m_{\mbQ} \neq 2p$ for every $[A] \in  \mcA(\mbQ, g, \ell)$.
\end{proposition}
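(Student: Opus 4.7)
The plan is to follow the same overall strategy as Propositions \ref{prp:mq8} and \ref{prp:mq12}, except that we use an \emph{explicit} small prime quadratic residue modulo $\ell$ in place of the small prime quadratic nonresidue supplied by Theorem \ref{thm:nonres}. Suppose $[A] \in \mcA(\mbQ,g,\ell)$ with $m_{\mbQ} = 2p$. Lemma \ref{lmm:e}(4) gives $\mathrm{ord}_{2}(\ell-1) = \mathrm{ord}_{2}(m_{\mbQ}) = 1$, so $\ell \equiv 3 \pmod 4$, while $p \mid m_{\mbQ} \mid \ell-1$ gives $\ell \equiv 1 \pmod p$. Because $p \equiv 1 \pmod 4$, quadratic reciprocity then yields $(p/\ell) = (\ell/p) = 1$, so $p$ itself is a quadratic residue modulo $\ell$ --- this is the key new input.

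Next, we carry out the trace computation with $q = p$ and exponent $2p$. Since $2p \mid \ell-1$, for each $r$ we have $p(2i_r-1) \equiv 0 \pmod{(\ell-1)/2}$; combined with $(p/\ell) = 1$, this yields $\chi^{p(2i_r-1)}(\fr_p) \equiv 1 \pmod \ell$. Letting $\alpha_{p,1},\dots,\alpha_{p,2g}$ denote the roots of the characteristic polynomial of $\fr_p$,
\[
a_{p,2p} = \sum_{r=1}^{2g} \alpha_{p,r}^{2p} \equiv p^p \sum_{r=1}^{2g} \chi^{p(2i_r-1)}(\fr_p) \equiv 2gp^p \pmod \ell.
\]
For $\ell$ sufficiently large (so that $4gp^p<\ell$), the Weil bound $|a_{p,2p}| \leq 2gp^p$ together with the congruence forces $a_{p,2p} = 2gp^p$; in particular $\alpha_{p,r}^{2p} = p^p$ for every $r$, so each $\alpha_{p,r}$ is a root of $T^{2p}-p^p$.

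The crucial combinatorial step is to analyze the factorization
\[
T^{2p}-p^{p} = (T^{2}-p)\cdot \Psi(T), \qquad \Psi(T) = T^{2(p-1)} + pT^{2(p-2)} + \cdots + p^{p-1},
\]
whose roots other than $\pm\sqrt{p}$ are $\pm\sqrt{p\zeta_{p}^{k}}$ for $k=1,\dots,p-1$. Since $p \equiv 1 \pmod 4$, both $\sqrt{p}$ and $\zeta_{2p}$ lie in $\mbQ(\zeta_{p})$, so each $\sqrt{p\zeta_{p}^{k}}$ lies in $\mbQ(\zeta_{p})$ and generates it over $\mbQ$. Hence every irreducible factor of $\Psi$ over $\mbQ$ has degree exactly $p-1$.

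Finally, writing the characteristic polynomial of $\fr_{p}$ as $(T^{2}-p)^{a}$ times a product of irreducible $\Psi$-factors, we obtain $2g = 2a + (p-1)B$ for some nonnegative integer $B$. Real roots $\pm\sqrt{p}$ of a $p$-Weil polynomial appear with even multiplicity, so $a$ is even and $4\mid 2a$; and since $4\mid p-1$ (as $p\equiv 1\pmod 4$), we also have $4\mid (p-1)B$. Thus $4\mid 2g$, i.e., $g$ is even --- contradicting the hypothesis. We expect the main obstacle to be the degree claim in the third paragraph: verifying that each $\sqrt{p\zeta_{p}^{k}}$ generates $\mbQ(\zeta_{p})$ over $\mbQ$, which relies essentially on $\sqrt{p}\in\mbQ(\zeta_{p})$, i.e., on $p\equiv 1\pmod 4$.
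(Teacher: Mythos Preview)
Your proof is correct and follows essentially the same approach as the paper: use quadratic reciprocity (with $p\equiv 1\pmod 4$ and $p\mid \ell-1$) to make $p$ itself a quadratic residue, run the trace argument at $\fr_p^{2p}$ to force all $\alpha_{p,r}$ into the roots of $T^{2p}-p^{p}$, and then use that the nontrivial irreducible factors have degree $p-1$ together with even multiplicity of real roots to get $4\mid 2g$. The only cosmetic differences are that the paper obtains the degree-$(p-1)$ claim by exhibiting the two $\gal(\mbQ(\zeta_p)/\mbQ)$-orbits $\{\pm(\tfrac{a}{p})\zeta_p^{a}\sqrt{p}\}$ rather than by your field-generation argument, and that your invocation of Lemma~\ref{lmm:e}(4) to get $\ell\equiv 3\pmod 4$ is correct but unnecessary.
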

\begin{proof}
    We may assume that $\ell>\frac{p^{p}}{4g}$. Suppose $[A] \in  \mcA(\mbQ, g, \ell)$ and $m_{\mbQ}=2p$.  Then $\ell$ is congruent to $1$ modulo $p$, and hence $p$ is a quadratic residue modulo $\ell$ by the quadratic reciprocity law.

    It follows that 
    \[
    \chi^{p(2i_{r}-1)}(\fr_{p}) \equiv 1 \bmod \ell 
    \quad \text{for $i=1, \dots, 2g$}.
    \]

    Let $a_{p,2p}$ be the trace of $\fr_{p}^{2p}$ acting on $V_{\ell}(A)$ and $\alpha_{p,1}, \dots \alpha_{p,2g} \in \bar{\mbZ}$ roots of the characteristic polynomial. Then we have
    \begin{align*}
        a_{p,2p} = \sum_{r=1}^{2g} \alpha_{p,r}^{2p} \equiv \sum_{r=1}^{2g} \chi^{2pi_{r}}(\fr_{p})  \bmod \ell \\
        =  \left( \sum_{r=1}^{2g} \chi^{p(2i_{r}-1)}(\fr_{p}) \right) p^{p} = 2gp^{p} \bmod \ell.     
    \end{align*}
      Since $|a_{p,2p}| \leq 2gp^{p}$, it follows that $a_{p,2p}=2gp^{p}$. Hence $\alpha_{p,1}, \dots \alpha_{p,2g}$ must be roots of $T^{2p}-p^{p}$. Since the action of $\gal(\mbQ(\zeta_{p})/\mbQ)$ on the set \[
        \{ \pm \zeta_{p}^{i}\sqrt{p} \mid i=1,\dots, p-1 \}
        \] of roots of $\frac{T^{2p}-p^{p}}{T^{2}-p}$ decomposes it into two orbits
        \[
        \{ \left( \tfrac{a}{p} \right)  \zeta_{p}^{a}\sqrt{p} \mid i=1,\dots, p-1 \} 
        \quad \text{and} \quad
        \{ -\left( \tfrac{a}{p} \right)  \zeta_{p}^{a}\sqrt{p} \mid i=1,\dots, p-1 \},
        \] the polynomial $T^{2p}-p^{p}$ is a product of $T^{2}-p$ and two irreducible polynomials of degree $p-1$. Since every real root of a $p$-Weil polynomial has even multiplicity, the characteristic polynomial must be a product of $(T^{2}-p)^{2}$ and irreducible polynomials of degree $p-1$. This is a contradiction.
\end{proof}

    The results in this subsection suggest that irreducible decompositions of $p$-Weil \emph{binomials} are often useful to solve particular cases of the Rasmussen--Tamagawa conjecture. There is room for improvement, but the present form is enough to derive RT($\mbQ, 5$).

As a corollary, we prove the following partial result, which partially generalizes the Rasmussen--Tamagawa conjecture for semistable abelian varieties \cite[Theorem 3.6]{RT17}:

\begin{corollary}
        Suppose $g$ is odd, and let $\mcA_{24}(\mbQ,g,\ell)$ be a subset of $\mcA(\mbQ,g,\ell)$ consisting of $[A] \in \mcA(\mbQ,g,\ell)$ with $e \mid 24$. Then $\mcA_{24}(\mbQ,g,\ell)$ is empty for every sufficiently large $\ell$.
\end{corollary}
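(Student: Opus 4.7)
The plan is to combine the divisibility constraints of Lemma \ref{lmm:e} with the exclusions of $m_{\mbQ}$ values established in Proposition \ref{prp:mq6} and the new Proposition \ref{prp:mq12}, reducing the hypothesis $e \mid 24$ to the single leftover case $m_{\mbQ}=12$, which is then ruled out by oddness of $g$.

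First I would list the admissible values of $e$. Since $4 \mid e$ by Lemma \ref{lmm:e} (2) and $e \mid 24$ by hypothesis, we have $e \in \{4,8,12,24\}$, so $e/2 \in \{2,4,6,12\}$. Next, Lemma \ref{lmm:e} (3) gives $m_{\mbQ} \mid \gcd(e/2,\ell-1) \mid e/2$, so $m_{\mbQ}$ divides one of $2, 4, 6, 12$. Collecting divisors, this yields $m_{\mbQ} \in \{1,2,3,4,6,12\}$.

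Now I would invoke Proposition \ref{prp:mq6}, which for sufficiently large $\ell$ rules out every value $m_{\mbQ} \leq 6$, leaving only $m_{\mbQ}=12$. Finally, since $g$ is assumed to be odd, Proposition \ref{prp:mq12} asserts that $m_{\mbQ} \neq 12$ for $\ell \gg 0$, giving the contradiction that completes the proof.

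There is no real obstacle here: the work has already been done in the preceding subsection, and the corollary is essentially an enumeration of the possible values of $m_{\mbQ}$ allowed by $e \mid 24$ together with the two exclusion results. The oddness of $g$ is used only in the very last step to apply Proposition \ref{prp:mq12}; it is not needed for the enumeration of $e$ or for Proposition \ref{prp:mq6}.
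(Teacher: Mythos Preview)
Your proof is correct and follows the same approach as the paper, which simply cites Propositions \ref{prp:mq6}, \ref{prp:mq8}, \ref{prp:mq12} and \ref{prp:mq2p}. In fact your version is slightly more economical: by observing via Lemma \ref{lmm:e} (3) that $m_{\mbQ} \mid e/2 \mid 12$, the values $m_{\mbQ}=8$ and $m_{\mbQ}=2p$ never arise, so Propositions \ref{prp:mq8} and \ref{prp:mq2p} are not actually needed.
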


\begin{proof}
    The claim follows immediately from Propositions \ref{prp:mq6}, \ref{prp:mq8}, \ref{prp:mq12} and \ref{prp:mq2p}.
\end{proof}

\subsection{Proof of the Rasmussen--Tamagawa conjecture for abelian fivefolds}\label{subsec:3.2}

The aim of this subsection is to provide a proof of Theorem \ref{thm:main}.

\begin{theorem}[=Theorem \ref{thm:main}]\label{thm:proof}
    $\mathrm{RT}(\mbQ, 5)$ holds.
\end{theorem}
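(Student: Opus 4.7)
The plan is to argue by contradiction: suppose $[A] \in \mcA(\mbQ, 5, \ell)$ for arbitrarily large $\ell$. By Proposition \ref{prp:key} (1) combined with Lemma \ref{lmm:e}, there is a decomposition $10 = \sum_{d \mid e} n_d \varphi(d)$ with $e = \operatorname{lcm}(d : n_d > 0)$, $4 \mid e$, and $m_{\mbQ} \mid e/2$; moreover $m_{\mbQ} \mid \ell - 1$, so every divisor $d$ of $m_{\mbQ}$ satisfies $\ell \equiv 1 \pmod d$. Combining Propositions \ref{prp:mq6}, \ref{prp:mq8}, \ref{prp:mq12}, and \ref{prp:mq2p} (all applicable because $g = 5$ is odd), $m_{\mbQ}$ is excluded from the set $\{1, 2, \ldots, 6\} \cup \{8, 10, 12\} \cup \{2p : p \text{ prime},\, p \equiv 1 \pmod 4\}$, and the goal is to rule out every admissible pair $(e, m_{\mbQ})$.

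The first step is to bound the prime factors of $e$. Since $\varphi(d) \ge 12$ whenever $d$ is divisible by a prime $q \ge 13$, no such $q$ can divide $e$. For $q \in \{7, 11\}$, the only admissible divisors with $q \mid d$ are $d \in \{q, 2q\}$, with $\varphi(d) \in \{6, 10\}$. If $q \mid e$, then $q \mid e/2$; if in addition $q \mid m_{\mbQ}$ (the generic case), then $\ell \equiv 1 \pmod q$ and Proposition \ref{prp:key} (2) forces $n_q, n_{2q} \ne 1$, so at least one of them is $\ge 2$ and contributes $\ge 2\varphi(q) \ge 12$, exceeding the budget of $10$. If instead $q \nmid m_{\mbQ}$, then $m_{\mbQ} \mid e/(2q)$ remains subject to the exclusions; a direct check shows that the forced $2$-part contribution (from some $n_d \ge 1$ with a large power of $2$ dividing $d$) combined with the contribution of at least $6$ from $n_q$ or $n_{2q}$ again exceeds $10$. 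Hence $e$ has only prime factors in $\{2, 3, 5\}$.

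Having reduced to $e = 2^a 3^b 5^c$, the bound $\varphi(d) \le 10$ gives $a \le 4$, $b \le 2$, $c \le 1$; with $4 \mid e$, this yields a finite list of eighteen candidates. For each, I would enumerate the admissible divisors $m_{\mbQ}$ of $e/2$, translate $m_{\mbQ} \mid \ell - 1$ into congruences on $\ell$, and apply Proposition \ref{prp:key} (2): for every $d \mid m_{\mbQ}$, $n_d \ne 1$, hence $n_d = 0$ or $n_d \ge 2$. A $\varphi$-value accounting against the total $10$, combined with the $\operatorname{lcm}$ requirement forcing certain $d$'s to appear, then yields a contradiction in each case. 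The main obstacle is the bookkeeping when $m_{\mbQ}$ is a proper divisor of $e/2$; for instance, with $e = 144 = 16 \cdot 9$ and $m_{\mbQ} = 24$, the residue $\ell \bmod 9$ is undetermined and a split into two sub-cases is needed. In one sub-case, $\ell \equiv 1 \pmod 9$ forces $n_9, n_{18} \ne 1$ and hence a contribution $\ge 12$; in the other, $\ell \not\equiv 1 \pmod 9$ allows $n_9 = 1$ (contribution $6$), but combined with the forced contribution $8$ from $n_{16} \ge 1$ (needed for $16 \mid e$), the total already exceeds $10$. Analogous accounting resolves every remaining candidate.
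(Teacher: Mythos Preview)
Your outline uses exactly the same ingredients as the paper (Lemma~\ref{lmm:e}, Proposition~\ref{prp:key}, and Propositions~\ref{prp:mq6}--\ref{prp:mq2p}) and would succeed if carried out, but it is a sketch rather than a proof, and one step is misargued. In the case $7\mid e$ but $7\nmid m_{\mbQ}$, you claim the $2$-part contribution plus the $\varphi(7)=6$ contribution ``again exceeds $10$''. This is false as stated: $n_{7}=1$, $n_{8}=1$ gives exactly $10$ with $e=56$. What actually rules this out is that then $m_{\mbQ}\mid e/14=4$, contradicting $m_{\mbQ}>6$; the same mechanism handles the other decompositions with $7\mid e$. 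Your worked example $e=144$ is in fact one of the trivial cases (the decomposition is impossible regardless of $m_{\mbQ}$, since $n_{16}\ge 1$ and $n_{9}$ or $n_{18}\ge 1$ already contribute $8+6>10$), so it does not illustrate the genuine difficulties.

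The paper organises the case analysis by the value of $m_{\mbQ}$ rather than by $e$, which is more efficient: once $m_{\mbQ}$ is pinned down, the divisibility $2m_{\mbQ}\mid e$ forces specific $n_{d}$'s to be positive, and Proposition~\ref{prp:key}~(2) together with Lemma~\ref{lmm:e}~(4) immediately gives the contradiction. In particular the cases $m_{\mbQ}\in\{20,30\}$ (which you defer to ``analogous accounting'') require real work---for $m_{\mbQ}=30$ one must separately treat $n_{30}=1$, $n_{20}=1$, $n_{15}=1$, and $n_{5}$ or $n_{10}\ge 2$, each time exploiting that some $n_{d}$ with $d\mid 30$ is forced to equal~$1$. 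Your enumeration over eighteen values of $e$ with all admissible $m_{\mbQ}\mid e/2$ would reproduce this, but with considerably more bookkeeping than the paper's direct route.
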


\begin{proof}
    Suppose $\ell \gg 0$ and $[A] \in \mcA(\mbQ, 5, \ell)$. By Proposition \ref{prp:key}, we have
    \[
        10=\sum_{d \mid e} n_{d}\varphi(d)
        \quad \text{and} \quad 
        e=\mathrm{lcm}(d \mid n_{d}>0).
    \]
    Recall that every prime divisor of $e$ is less than or equal to $2g+1=11$. If $11$ divides $e$, then we must have
    \[
    10=\varphi(11) 
    \quad \text{and} \quad 
    e=11, 
    \quad \text{or} \quad 
    10=\varphi(22) 
    \quad \text{and} \quad 
    e=22. 
    \] Both cases contradict Lemma \ref{lmm:e} (2). Therefore, every prime divisor of $e$ is either $2,3,5$ or $7$. Since $m_{\mbQ}$ divides $\gcd(\frac{e}{2},\ell-1)$ by Lemma \ref{lmm:e} (3), every prime divisor of $m_{\mbQ}$ is also equal to $2,3,5$ or $7$.

    First, let us suppose that $m_{\mbQ}$ is divisible by $7$. Since $m_{\mbQ}$ is even and is a divisor of $\ell-1$, it follows that
    \[
    \ell \equiv 1 \bmod 14.
    \] On the other hand, in this case, one of $n_{7}$ and $n_{14}$ must be one since $7$ also divides $e$. It follows by Proposition \ref{prp:key} (2) that $\ell$ is not congruent to $1$ modulo $14$. This is a contradiction.

    Secondly, suppose that $m_{\mbQ}$ is not divisible by $5$ or $7$. In other words, every prime divisor of $m_{\mbQ}$ is either $2$ or $3$. In this case, we claim that $9$ does not divide $m_{\mbQ}$. In fact, suppose that $9$ divides $m_{\mbQ}$ and hence $\ell \equiv 1 \bmod 18$. It is easy to confirm that
    \[
    \{ n>0 \mid \varphi(9n) \leq 10  \} = \{ 1, 2 \}
    \quad \text{and} \quad
    \varphi(9)=\varphi(18)=6.
    \] It follows that either $n_{9}$ or $n_{18}$ must be one, and this implies  $\ell \not \equiv 1 \bmod 18$ by Proposition \ref{prp:key}. This is a contradiction.

    Similarly, it is easy to show that $16$ does not divide $m_{\mbQ}$. Therefore, it follows that
    \[
    m_{\mbQ} \in \{ 2,4,6,8,12,24 \}.
    \] 
    By Proposition \ref{prp:mq6}, Proposition \ref{prp:mq8} and Proposition \ref{prp:mq12}, it follows that $m_{\mbQ}$ must be $24$. In this case, since $16$ divides $e$, it follows that $n_{16}=1$. Hence we have
    \[
    2=\sum_{d \mid e} n_{d}\varphi(d) - \varphi(16) 
    \quad \text{and} \quad 
    e=\mathrm{lcm}(d \mid n_{d}>0).
    \] Moreover, since $3$ also divides $e$, either $n_{3}$ or $n_{6}$ must be one. By Proposition \ref{prp:key} (2), this implies that $\ell \not \equiv 1 \bmod 6$, which contradicts $24=m_{\mbQ} \mid \ell-1$.

    Lastly, suppose $5$ divides $m_{\mbQ}$ (and $7$ does not). Note that, since 
    \[
            \{ n>0 \mid \varphi(5n) \leq 10 \}
            =
            \{ 1,2,3,4,6 \},
    \] at least one of $n_{5}, n_{10}, n_{15}, n_{20}, n_{30}$  is positive. If $n_{30}>0$, then it must be one since $\varphi(30)=8$. Moreover, since
    \[
        \{ n>0 \mid \varphi(n) \leq 2 \} = \{ 1,2,3,4,6 \},
    \] it follows that 
    $
    e \in \{ 30,60 \}
    $ and hence $m_{\mbQ} \leq 30$. We can apply the same argument to the case where $n_{15}$ or $n_{20}$ is positive to conclude $m_{\mbQ} \leq 30$.

    Now let us assume that $n_{5}$ or $n_{10}$ is positive. By Proposition \ref{prp:key} (2), it must be greater than one, since otherwise $10$ would not divide $\ell-1$. Since $2\varphi(5)=2\varphi(10)=8$, one can easily confirm that $m_{\mbQ} \leq 30$ also in this case.  To summarize, it holds that $m_{\mbQ} \leq 30$. In what follows, we derive a contradiction from this inequality.

    \begin{enumerate}
        \item $m_{\mbQ}=10$ : This contradicts Proposition \ref{prp:mq2p} for $p=5$.
        \item $m_{\mbQ}=20$ : In this case, $20 \mid \ell-1$ and $40 \mid e$. Moreover, it is easy to confirm that either $n_{5}$ or $n_{10}$ must be one to realize $m_{\mbQ}=20$, implying $10 \nmid \ell-1$ by Proposition \ref{prp:key} (2). This is a contradiction.
        \item $m_{\mbQ}=30$ : Note that $30 \mid \ell-1$.
        \begin{enumerate}
            \item If $n_{30}=1$, $30 \nmid \ell-1$ by Proposition \ref{prp:key} (2). 
            \item  Suppose $n_{20}=1$. Since $\varphi(20)=8$ and $3$ divides $e$, either $n_{3}$ or $n_{6}$ is also equal to one. Hence $6 \nmid \ell-1$ by Proposition \ref{prp:key} (2).
            \item If $n_{15}=1$, $15 \nmid \ell-1$ by Proposition \ref{prp:key} (2). 
            \item If $n_{10}>0$, then it must be greater than one by Proposition \ref{prp:key} (2). However, since $3$ divides $e$, either $n_{3}$ or $n_{6}$ must be one. Hence $6 \nmid \ell-1$. The same argument works if $n_{5}>0$.
        \end{enumerate}
    \end{enumerate}
     This completes the proof.
\end{proof}

Finally, let us briefly discuss RT($\mbQ, 7$). Suppose $\ell \gg 0$ and $[A] \in \mcA(\mbQ,7,\ell)$. Using Rasmussen-Tamagawa's techniques explained in the last section, one can check that\footnote{We thank Naganori Yamaguchi for kindly providing us a computer program to calculate possible values of $m_{\mbQ}$. }
\[
m_{\mbQ} \in \{ 8,10, 12, 14,18,20,24,30 \}.
\]
By Proposition \ref{prp:mq8} and Proposition \ref{prp:mq12}, we have $m_{\mbQ} \neq 8, 12$. Proposition \ref{prp:mq2p} shows that $m_{\mbQ} \neq 10$, and a similar argument proves that $m_{\mbQ}$ cannot be $30$ (consider the factorization of $T^{30}-5^{15}$). There are four possibilities left, namely:
\[
m_{\mbQ} \in \{ 14,18,20,24 \}.
\]
In any case, our result is not enough. For example, suppose $m_{\mbQ}=14$. Then $7$ is a quadratic nonresidue modulo $\ell$ since $\ell \equiv 3 \bmod 4$. Then one can show that every root of the characteristic polynomial of $\fr_{7}$ is a root of $T^{14}+7^{7}$, which factors as
\[
(T^2 + 7) (T^6 - 7T^5 + 21T^4 - 49T^3 + 147T^2 - 343T + 343)  (T^6 + 7T^5 + 21T^4 + 49T^3 + 147T^2 + 343T + 343).
\] This does not lead to a contradiction, since, for example, $T^{14}+7^{7}$ itself can be the characteristic polynomial of $\mathrm{Frob}_{7}$. On the other hand, if there is a prime quadratic residue $p$ modulo $\ell$ with $p^{7}<\ell/28$, then considering the factorization of $T^{14}-p^{7}$ yields $m_{\mbQ} \neq 14$ , cf. the discussion at the end of Section \ref{sec:2.1}.

As another example, suppose $m_{\mbQ}=24$. A similar argument to Proposition \ref{prp:mq12} works if there exists a prime quadratic nonresidue $p$ modulo $\ell$ such that $p^{12}<\ell/28$, and the present bound (Theorem \ref{thm:nonres}) is not enough to find such a prime. 

\bibliographystyle{amsalpha}
\bibliography{references}

\end{document}